\newtheorem{prop}{Proposition}[section]
\newtheorem{theorem}[prop]{Theorem}
\newtheorem{rem}[prop]{Remark}
\newtheorem{lemma}[prop]{Lemma}
\newtheorem{cor}[prop]{Corollary}
\theoremstyle{definition}
\newtheorem{defi}[prop]{Definition}
\newtheorem{ex}[prop]{Example}
\def\cH{\mathcal{H}}
\def\cB{\mathcal{B}}
\def\cD{\mathcal{D}}
\def\cF{\mathcal{F}}
\def\cK{\mathcal{K}}
\def\cO{\mathcal{O}}
\def\cU{\mathcal{U}}
\def\R{\mathbb R}
\def\Z{\mathbb Z}
\def\C{\mathbb C}
\def\ux{\underline{x}}
\def\uy{\underline{y}}
\def\uz{\underline{z}}
\def\u0{\underline{0}}
\def\uomega{\underline{\omega}}
\def\b0{\mbox{\boldmath $0$}}
\def\gl2{{\rm GL}_2(\R)}
\def\gln{{\rm GL}_n(\R)}
\title{A Two Plus One Dimensional Continuous Wavelet Transform}
\author{Raja Milad}
\address{Department of Mathematics \& Statistics, Dalhousie University}
\email{raja.ma.milad@dal.ca}
\author{Keith F. Taylor}
\address{Department of Mathematics \& Statistics, Dalhousie University}
\email{keith.taylor@dal.ca}
\begin{document}

\begin{abstract}
The group $G_2$ of invertible affine transformations 
of $\R^2$ has, up to equivalence, one square--integrable
representation. Two new realizations of this
representation are presented and novel continuous
wavelet transforms, acting on functions of two plus
one variables, are derived.
\end{abstract}

\maketitle

\section{Introduction}
An analog of the continuous wavelet transform is 
introduced for the analysis of functions of two plus
one variables. This new transform
is derived from a square--integrable representation
of $G_2$, the group of all invertible affine transformations of the two dimensional plane. This is 
similar to how the continuous wavelet transform in
one variable can viewed as arising from a 
square--integrable representation of the group of affine transformations of the real 
line (see \cite{GMPI} and \cite{GMP}). We begin by
informally describing the transform and reconstruction
procedure. Any unexplained notation can be found in
Section 2.

We use column vectors $\ux=\begin{pmatrix}
x_1\\
x_2
\end{pmatrix}$ as two dimensional spacial variables
and row vectors $\uomega=(\omega_1,\omega_2)$ for 2
dimensional
frequency variables. We also use $t$ for a real
variable. Suppose $(\uomega,t)\to w(\uomega,t)$ is
a $\rho_2$--wavelet, a concept defined in Section 5.
For $\ux=\begin{pmatrix}
x_1\\
x_2
\end{pmatrix}$ and 
$A=\begin{pmatrix}
a & b\\
c & d
\end{pmatrix}$, a nonsingular real matrix, let
\begin{equation}\label{w_xA}
w_{\ux,A}(\uomega,t)=
\textstyle\frac{\|\uomega A\|}{\|\uomega\|}
e^{2\pi i\uomega\ux}\,w\left(\uomega A,
\frac{t-u_{\uomega,A}}{v_{\uomega,A}}\right),
\end{equation}
 where $u_{\uomega,A}$
 and $v_{\uomega,A}$ are rational functions of $\ux$ and $A$ as given in Equation 
\eqref{uv} in Section 4. The $\rho_2$--wavelet
transform is $f\to V_wf$, where 
\[
V_wf[\ux,A]=
\int_{\infty}^\infty\int_{\infty}^\infty
\int_{\infty}^\infty f(\omega_1,\omega_2,t)
\overline{w_{\ux,A}(\omega_1,\omega_2,t)}\,d\omega_1
d\omega_2dt.
\]
Then appropriate functions, $f$, can be recovered
from the $V_wf[\ux,A]$'s as follows:
\begin{equation}\label{intro_reconstruct}
f=\int_{\gl2}\int_{\R^2}V_wf[\ux,A]\,w_{\ux,A}\,
\frac{d\ux\,d\mu_{\gl2}(A)}{|\det(A)|}.
\end{equation}
The meaning of the vector--valued integral in
\eqref{intro_reconstruct} is in a weak sense. The
precise statement is given in 
\ref{reconstruction_final2}. The 
measure $\mu_{\gl2}$ is the Haar measure of
$\gl2$, the group of nonsingular $2\times 2$ real
matrices (see Equation \eqref{Haar_GL2}). 

An inspection of the manner in which $w$ is 
transformed in \eqref{w_xA} by the six real parameters incorporated
in the pair $(\ux,A)$ would indicate why we refer
to the $\rho_2$--wavelet transform as a two plus one
dimensional continuous wavelet transform. We view
the $\uomega$'s appearing in \eqref{w_xA} as two dimensional 
Fourier variables dual to spacial variables and the
$t$ as a time variable. In \eqref{w_xA}, functions
are transformed in the $t$ variable in a twisted
manner analogous to that done in the one dimensional
continuous wavelet transform. One the other hand,
action in the $\uomega$ variable is exactly the
Fourier transform version of the natural representation
of $G_2$ on functions of two variables, where all
$2\times 2$ nonsingular matrices as well as translations are available to 
manipulate a wavelet.
The main
purpose of this paper is to derive the $\rho_2$--wavelet
transform from an explicit realization of the unique
square--integrable representation of the group of all
invertible affine transformations of the plane. We
draw heavily on the results in a companion paper
\cite{MT}.

The theoretical basis for deriving a continuous
wavelet transform from a square--integrable
representation of a locally compact group is the Duflo--Moore Theorem (presented as Theorem \ref{Du_Mo} below)
which was established in \cite{DM}. An early case
where a two dimensional transform was associated 
with a square--integrable representation of a group of
affine transformations is \cite{Muz}.
In any dimension $n$, any affine transformation of
$\R^n$ is of the form $\uz\to A\uz+\ux$, for some
$n\times n$ real matrix $A$ and some $\ux\in\R^n$.
This transformation, denoted $[\ux,A]$,
will be invertible if and only
if $A$ is invertible; that is, $A\in\gln$. Let
$G_n=\{[\ux,A]:\ux\in\R^n, A\in\gln\}$. This is
a locally compact group when equipped with
composition of transformations as group product and 
the natural topology. When $H$ is a closed subgroup
of $\gln$, the set $\{[\ux,A]:\ux\in\R^n,A\in H\}$,
which we denote as $\R^n\rtimes H$, is a closed
subgroup of $G_n$. In the case dealt with in \cite{Muz},
$H=\left\{\begin{pmatrix}
r\cos(\theta) & -r\sin(\theta)\\
r\sin(\theta) & r\cos(\theta)
\end{pmatrix}: r>0, 0\leq\theta<2\pi\right\}$, where 
the natural representation of $\R^2\rtimes H$ as
defined in Example \ref{natural_rep} is square--integrable and yields a useful continuous wavelet 
transform (see \cite{ACMP} and \cite{AM}, for example). 
In \cite{BT}, it was recognized that when $H$ is a
closed subgroup of $\gln$ with the feature that
there are open and free $H$--orbits in $\widehat{\R^n}$
then the natural representation of $\R^n\rtimes H$
has square--integrable subrepresentations and, thus,
yields a continuous wavelet transform via the
Duflo--Moore Theorem. An orbit is {\em free} if 
each non--identity element of $H$ moves any point in
the orbit. For a comprehensive treatment
of this and related phenomena see \cite{Fuh}. The
continuous shearlet transform in two dimensions as 
applied in \cite{KuLa} and \cite{GLL} is another
useful transform that arises from a closed subgroup $H$
of $\gl2$ satisfying the assumption of \cite{BT} that
there exists an open and free $H$--orbit.

If $H=\gl2$, there is an open $H$--orbit in 
$\widehat{\R^2}$, but it is not a free orbit.
Nevertheless, $G_2=\R^2\rtimes\gl2$ 
has a square--integrable representation
(see Theorem \ref{List_2_d_2} below or Theorem 4.7 of
\cite{MT}). Much of \cite{MT} is devoted to deriving
and explicit expression for this representation. In
Section 4 below, we develop a unitarily equivalent
realization, given as Equation \eqref{sigma_1_1}, on
a more concrete Hilbert space. In Section 5, we 
convert the underlying Hilbert space again to arrive
at a realization of the representation that yields
what we call the $\rho_2$-continuous wavelet transform
as hinted at in \eqref{w_xA} and \eqref{intro_reconstruct} above.

In Section 2, we establish the notational conventions
we use and recall the necessary basic theory. The
theory of square--integrable representations is reviewed
in Section 3, including the Duflo--Moore Theorem,
the Duflo--Moore operator, and the derivation of a 
continuous wavelet transform from a square--integrable 
representation. We also establish Theorem \ref{List_2_d_2} which adds $\gl2$, itself, 
to the known list of
closed subgroups $H$ of $\gl2$ with the property that
$\R^2\rtimes H$ has a square--integrable
representation. The proof uses a key result from
\cite{KL}. We note that an irreducible representation
$\sigma$
of $\R^2\rtimes\gl2$, that is $G_2$, is constructed,
and a proof that $\sigma$ is square--integrable
is given, in \cite{MT}. The proof in \cite{MT} is
does not use the results of \cite{KL}. The formula
for $\sigma$ is given in Equation \eqref{sigma_pointwise_formula_1} in Section 4.

Also, in Section 4, we define a unitary map from the
Hilbert space of $\sigma$ onto a Hilbert space we
denote as 
$L^2\big(\widehat{\R^2}\times\widehat{\R}\big)$ to
emphasize the role the third component variable
plays. We denote by $\sigma_2$ the representation
obtained by moving $\sigma$ to this concrete Hilbert 
space. We write $\widehat{\R^2}$ and $\widehat{\R}$
because we view $\sigma_2$ as acting on the Fourier 
transform side. The formula for $\sigma_2$ is given
in Equation \eqref{sigma_1_1}. The main theorem in
Section 4 is Theorem \ref{Duflo_Moore_operator} where
the important Duflo--Moore operator associated with
$\sigma_2$ is identified. This allows the 
definition of $\sigma_2$--wavelets in Section 5
and an explicit statement of the $\sigma_2$--wavelet
transform and reconstruction formula in Theorem
\ref{reconstruction_final1}. We note that a method 
for determining the Duflo--Moore operator under
hypotheses that would apply to $G_2$ is derived
in \cite{ACDL}, but no actual examples are worked
out with their method. In the present paper, the
nature of the Duflo--Moore operator is suggested 
by our explicit proof that $\sigma_2$ is 
square--integrable and the uniqueness part of the
Duflo--Moore Theorem establishes the exact form of
the Duflo--Moore operator.
Finally, in Section 5, we execute the inverse
Fourier transform in the third variable to arrive
at the representation $\rho_2$ which provides the
transformations given in \eqref{w_xA} above.

\section{Preliminaries}
We use the following notational conventions: 
\begin{itemize}
\item $\R$ denotes the field of real numbers, often
viewed as a group under addition.
\item $\R^*=\R\setminus\{0\}$ is a group under
multiplication.
\item $\R^+=\{a\in\R^*:a>0\}$.
\item $\R^n=\left\{\ux=\begin{pmatrix}
x_1\\
\vdots\\
x_n
\end{pmatrix}:
x_1,\cdots,x_n\in\R\right\}$.
\item $\gln$ denotes the group of nonsingular 
$n\times n$ real matrices.
\item $\widehat{\R^n}=\{\uomega=
(\omega_1,\cdots\omega_n):
\omega_1,\cdots,\omega_n\in\R\}$.
\item $\widehat{\R^2}\times\widehat{\R}=
\{(\uomega,\omega_3):\uomega\in\widehat{\R^2},
\omega_3\in\widehat{\R}\}$, where $\widehat{\R}$
is identified with $\R$.
\item If 
$1\leq p\leq\infty$, $L^p(\R^n)$,
$L^p\big(\widehat{\R^n}\big)$, or
$L^p\big(\widehat{\R^2}\times\widehat{\R}\big)$ denote the
usual Lebesgue spaces (see, for example, \cite{Ru}).
\item The Fourier transform:
$\widehat{f}(\uomega)=\int_{\R^2}f(\ux)
e^{2\pi i\uomega\ux}d\ux$, for all $\uomega
\in\widehat{\R^n}$, for $f\in L^1(\R^n)$.
\item $\cF:L^2(\R^n)\to L^2\big(\widehat{\R^n}\big)$
is the unitary map satisfying 
$\cF f=\widehat{f}$, for $f\in L^1(\R^n)\cap 
L^2(\R^n)$.
\item For $\ux\in\R^n$ and $A\in\gln$, let 
$[\ux,A]$ denote the affine transformation of
$\R^n$ given by $[\ux,A]\uz=\ux+A\uz$, for all
$\uz\in\R^n$.
\end{itemize}
Let $G_n=\{[\ux,A]:\ux\in\R^n, A\in\gln\}$. Equipped with composition of affine transformations, $G_n$
is a group. The product of $[\ux,A]$ and $[\uy,B]$ in $G_n$ is
$[\ux,A][\uy,B]=[\ux+A\uy,AB]$. The identity is
$[\u0,{\rm id}]$, where ${\rm id}$ is the identity
matrix, and the inverse is given by 
$[\ux,A]^{-1}=[-A^{-1}\ux,A^{-1}]$. Both $\gln$
and $G_n$ are locally compact groups when endowed
with the obvious topologies. All the properties
of locally compact groups that we need can be
found in \cite{Fol}, \cite{KT}, or \cite{HR}. We
note that $G_n$ can be viewed as the
semidirect product $\R^n\rtimes\gln$ with
$\gln$ acting on $\R^n$ through matrix product. 
For any 
locally compact group, there always exists a nonzero
Radon measure that is invariant under 
left translations,
a {\em left Haar measure}. Such a measure is
unique up to a positive multiple. 
Let $C_c(X)$ denote 
the space of continuous
complex-valued functions of compact support on a
locally compact Hausdorff space $X$. 
 If $G$ is
a locally compact group, we can uniquely specify 
a left Haar measure on $G$ by showing how to
compute $\int_G f\,d\mu_G$, for any $f\in C_c(G)$.
For example,
\begin{equation}\label{Haar_GL2}
\int_{\gl2}f\,d\mu_{\gl2}=
\int_{-\infty}^\infty\int_{-\infty}^\infty
\int_{-\infty}^\infty\int_{-\infty}^\infty
f\begin{pmatrix}
w & x\\
y & z
\end{pmatrix}\frac{dw\,dx\,dy\,dz}{(wz-xy)^2},
\end{equation}
for all $f\in C_c(\gl2)$, where the integrals
in the right hand side of \eqref{Haar_GL2} are
just the Riemann integral (see Example 4 following 
Proposition 2.21 of \cite{Fol}).
It will be convenient
to sometimes use $\int_{\gl2}f(A)\,dA$ to denote
$\int_{\gl2}f\,d\mu_{\gl2}$. Left invariance means 
that $\int_{\gl2}f(BA)\,dA=\int_{\gl2}f(A)\,dA$,
for any $B\in\gl2$ and any $f$ for which the integral
makes sense. In fact, the integral in \eqref{Haar_GL2} is also right invariant. That is,
$\int_{\gl2}f(AB)\,dA=\int_{\gl2}f(A)\,dA$. Groups
with this property are called {\em unimodular}.
However, the group $G_2$ is nonunimodular.

For any locally compact group $G$, there exists
a continuous homorphism $\Delta_G:G\to\R^+$, 
called the {\em modular function} of $G$ such
that, for any $y\in G$, 
\[
\int_Gf(x)\,d\mu_G(x)=\Delta_G(y)
\int_Gf(xy)\,d\mu_G(x),
\]
 for any $f$ for which
the integrals make sense.
You can directly verify with standard
change of variables that, for any $[\uy,B]
\in G_2$,
\[
\int_{\gl2}\int_{\R^2}
f\big([\uy,B][\ux,A]\big)
\frac{d\ux\,dA}{|\det(A)|}=
\int_{\gl2}\int_{\R^2}
f[\ux,A]\frac{d\ux\,dA}{|\det(A)|}
\]
and
\[
|\det(B)|^{-1}\int_{\gl2}\int_{\R^2}
f\big([\ux,A][\uy,B]\big)
\frac{d\ux\,dA}{|\det(A)|}=
\int_{\gl2}\int_{\R^2}
f[\ux,A]\frac{d\ux\,dA}{|\det(A)|},
\]
for any $f\in C_c(G_2)$.
Thus, the left Haar integral on $G_2$ is given by
\begin{equation}\label{Haar_G_2}
\int_{G_2}f\,d\mu_{G_2}=
\int_{\gl2}\int_{\R^2}
f[\ux,A]\frac{d\ux\,dA}{|\det(A)|}, 
\text{ for all } f\in C_c(G_2)
\end{equation}
and $\Delta_{G_2}[\uy,B]=|\det(B)|^{-1}$, for
all $[\uy,B]\in G_2$.

Let $\cH$ be a Hilbert space with inner product
$\langle\cdot,\cdot\rangle_{\cH}$ and
norm $\|\cdot\|_{\cH}$. Let $\cB(\cH)$ denote
the Banach $*$-algebra of bounded linear operators
on $\cH$. The {\em weak operator topology} on
$\cB(\cH)$ is the weakest topology such that
$T\to\langle T\xi,\eta\rangle_{\cH}$ is a continuous function on
$\cB(\cH)$,
for all $\xi,\eta\in\cH$.

If $\cK$ is another Hilbert space, a
linear map $U:\cH\to\cK$ is a {\em unitary map}
if it is a surjective isometry; that is, if
$U\cH=\cK$ and
$\|U\xi\|_{\cK}=\|\xi\|_{\cH}$, for all $\xi\in\cH$.
The polarization identity implies that $U$ then
preserves inner products. Let $\cU(\cH)$ denote
the set of all unitary maps from $\cH$ to $\cH$.
Under composition of operators, $\cU(\cH)$ is
a group with identity $I$, where $I\xi=\xi$, for
all $\xi\in\cH$.
All that we need about
Hilbert spaces and operators on and between them can
be found in \cite{Ru} or \cite{KR}.
If $(X,\Sigma,\mu)$ is a measure space, then
$L^2(X,\mu)$ is a Hilbert space (Example 4.5 of
\cite{Ru}). If $G$ is a locally compact group,
$L^2(G,\mu_G)$ will simply be denoted $L^2(G)$.

An unbounded linear operator plays a critical
role in this paper. See Section 2.7 of \cite{KR}
for a basic discussion of unbounded operators on
a Hilbert space. Following Example 2.7.2 of 
\cite{KR}, let $(X,\Sigma,\mu)$ be a measure space 
and let $g:X\to\C$ be a measurable function. Let
$\cD_{M_g}=
\left\{f\in L^2(X,\mu):\int_X|gf|^2\,d\mu<\infty
\right\}$ and define $M_g:\cD_{M_g}\to L^2(X,\mu)$
by $M_gf=gf$, for all $f\in\cD_{M_g}$. Then 
$\cD_{M_g}$ will be a dense linear subspace of 
$L^2(X,\mu)$. 
Note that $\cD_{M_g}=L^2(X,\mu)$ if
and only if $g$ is essentially bounded; $M_g
\in\cB(\cH)$ in this case. Even when $g$ is not essentially 
bounded, $M_g$ is a closed operator. If $g$ is
real-valued, then $M_g$ is selfadjoint.
If $g(x)\geq 0$, for all $x\in X$,
then $M_g$ is a positive operator.

Let $G$ be a locally compact group. 
A {\em 
unitary representation} $\pi$ of $G$ on a Hilbert space $\cH_\pi$ is a homomorphism 
 $\pi:G\to\cU(\cH_\pi)$
that is continuous when $\cU(\cH_\pi)$ is equipped 
with the weak operator topology. We will often just write {\em representation} to mean unitary 
representation. The {\em left
regular representation} of $G$ is denoted 
$\lambda_G$, which acts on $\cH_{\lambda_G}=L^2(G)$
by $\lambda_G(x)f(y)=f(x^{-1}y)$, for $\mu_G$-a.e.
$y\in G$, for $f\in L^2(G)$ and each $x\in G$.
If $\pi$ and $\sigma$ are two representations of 
$G$ and there exists a unitary map $U:\cH_\pi\to
\cH_\sigma$ such that $U\pi(x)=\sigma(x)U$, for
all $x\in G$, then we say $\pi$ and $\sigma$ are
equivalent.

Fix a
representation $\pi$ of $G$.
For each $\eta,\xi\in\cH_\pi$, define a complex-valued
function $V_\eta\xi$ on $G$ by
\begin{equation}\label{voice}
V_\eta\xi(x)=\langle\xi,\pi(x)\eta\rangle_{\cH_\pi},
\quad\text{for all }x\in G.
\end{equation}
Let $C_b(G)$ denote
the Banach space of bounded continuous complex-valued
functions on $G$ equipped with the supremum norm.
The continuity assumption on $\pi$ and the
Cauchy-Schwarz inequality imply that $V_\eta\xi
\in C_b(G)$. If $\eta\in\cH_\pi$ is held fixed, then
the map $V_\eta:\xi\to V_\eta\xi$ is a continuous linear
map of $\cH_\pi$ into $C_b(G)$. 

A subspace $\cK$ of
$\cH_\pi$ is called {\em $\pi$-invariant} if
$\xi\in\cK$ implies $\pi(x)\xi\in\cK$, for all
$x\in G$. If $\cK$ is a closed subspace, then
$\cK$ is $\pi$-invariant if and only if $\cK^\perp$
is $\pi$-invariant. If $\cK$ is a $\pi$-invariant 
closed subspace of $\cH_\pi$, define $\pi_{\cK}(x)
=\pi(x)\big|_{\cK}$, for all $x\in G$. Then 
$\pi_{\cK}$ is a representation of $G$ on the
Hilbert space $\cK$. We call $\pi_{\cK}$ a 
{\em subrepresentation} of $\pi$ and we can view
$\pi$ as the direct sum of $\pi_{\cK}$ and
$\pi_{\cK^\perp}$ in the obvious manner.
We say that $\pi$ is 
{\em irreducible} when $\{0\}$ and $\cH_\pi$
are the only $\pi$-invariant closed subspaces
of $\cH_\pi$. A representation $\pi$ is irreducible
if and only if $V_\eta$ is injective for every
nonzero $\eta\in\cH_\pi$ (see Proposition 1.32, 
\cite{KT}).

\begin{ex}\label{natural_rep}
Let $H$ be a closed subgroup of $\gln$ and let
$\R^n\rtimes H =
\{[\ux,A]:\ux\in\R^n,A\in H\}$,
 a closed subgroup of
$G_n$. The action of $\R^n\rtimes H$ on $\R^n$
as affine transformations yields a representation
of $\R^n\rtimes H$ on the Hilbert space $L^2(\R^n)$ as follows. For
$[\ux,A]\in\R^n\rtimes H$ and $f\in L^2(\R^n)$,
\begin{equation}\label{natural}
\rho[\ux,A]f(\uy)=|\det(A)|^{-1/2}f\big(
A^{-1}(\uy-\ux)\big), \text{ for a.e. }\uy\in\R^n.
\end{equation}
It is easy to verify that each $\rho[\ux,A]$ is
 a homomorphism of $\R^n\rtimes H$ into
$\cU\big(L^2(\R^n)\big)$. See page 68 of \cite{Fol}
for a more general situation where the required
continuity is established.
We will call $\rho$ the {\em natural
representation} of $\R^n\rtimes H$.
Note that $\rho$ is also often called the quasi-regular
representation of $\R^n\rtimes H$. Let's use the
unitary map $\cF:L^2(\R^n)\to L^2\big(\widehat{\R^n}
\big)$ to create a new representation of 
$\R^n\rtimes H$. For each $[\ux,A]\in \R^n\rtimes H$,
let 
$\widehat{\rho}[\ux,A]=\cF\rho[\ux,A]\cF^{-1}$. 
A simple
calculation (see Proposition 1 of \cite{BT}, for
example) shows that
\begin{equation}\label{natural_Fourier}
\widehat{\rho}[\ux,A]\xi(\uomega)=|\det(A)|^{1/2}
e^{2\pi i\uomega\ux}\xi(\uomega A), \text{ for a.e. }
\uomega\in\widehat{\R^n} \text{ and all }
\xi\in L^2\big(\widehat{\R^n}\big).
\end{equation}
Then $\widehat{\rho}$ is a 
representation of $\R^n\rtimes H$
that is equivalent to the natural representation.
Now, suppose $\cK$ is a closed 
$\widehat{\rho}$--invariant
subspace of $L^2\big(\widehat{\R^n}\big)$. Let 
$P_{\cK}$ be the orthogonal projection of
$L^2\big(\widehat{\R^n}\big)$ onto $\cK$. 
Since $\cK$ is
$\widehat{\rho}$-invariant, $P_{\cK}$ commutes with
$\widehat{\rho}[\ux,A]$, 
for all $[\ux,A]\in \R^n\rtimes H$.
For each $\ux\in\R^n$, 
$\widehat{\rho}[\ux,{\rm id}]\xi(\uomega)
=e^{2\pi i\uomega\ux}\xi(\uomega)$, for a.e. 
$\uomega\in\widehat{\R^n}$ and each $\xi\in 
L^2\big(\widehat{\R^n}\big)$. Since $P_{\cK}$
commutes with pointwise multiplication by all
the characters of $\R^n$, there exists a Borel
subset $E\subseteq\widehat{\R^n}$ such that
$P_{\cK}\xi={\bf 1}_E\xi$, for all $\xi\in 
L^2\big(\widehat{\R^n}\big)$ (Theorem 9.17, 
\cite{Ru}). Now, for each 
$A\in H$, 
$P_{\cK}=\widehat{\rho}[\u0,A]P_{\cK}\widehat{\rho}[\u0,A]^{-1}$
implies
\[
{\bf 1}_E(\uomega)\xi(\uomega)=P_{\cK}\xi(\uomega)=
\widehat{\rho}[\u0,A]P_{\cK}\widehat{\rho}[\u0,A]^{-1}\xi(\uomega)=
{\bf 1}_E(\uomega A)\xi(\uomega),
\]
for a.e. $\uomega\in\widehat{\R^n}$ and all
$\xi\in L^2\big(\widehat{\R^n}\big)$. This 
implies that the symmetric difference of 
$EA$ and $E$ is a null set for every $A\in H$. 
Conversely, if $E$ is a
Borel subset of $\widehat{\R^n}$ with the property
that $EA$ agrees with $E$ up to a set of measure 0,
then $\cK_E=\{{\bf 1}_E\xi:\xi\in 
L^2\big(\widehat{\R^n}\big)\}$ is a 
$\widehat{\rho}$--invariant closed subspace of 
$L^2\big(\widehat{\R^n}\big)$.
We are interested in the simple case that 
$E$ is a single $H$-orbit. That is, for some,
hence any, $\uomega\in E$, $\{\uomega A:A\in H\}=
E$. If $E$ is a single $H$--orbit of positive
Lebesgue measure, then the restriction of 
$\widehat{\rho}$
to $\cK_E$ is an irreducible subrepresentation
of $\widehat{\rho}$. The
easiest way for this to arise is if there are
$H$--orbits in $\widehat{\R^n}$ that are open
subsets of $\widehat{\R^n}$. For example, if
$H=\gln$, itself, then $\cO=\widehat{\R^n}\setminus
\{\u0\}$ is a $\gln$--orbit. Since $\cO$ is
co-null, $\cK_{\cO}=L^2\big(\widehat{\R^n}\big)$.
Therefore, the representation $\widehat{\rho}$ given by
\eqref{natural_Fourier} is an irreducible
representation of $G_n$. Since it is equivalent to
$\widehat{\rho}$, the natural representation $\rho$ of $G_n$
is irreducible, for any positive integer $n$.
\end{ex}
\begin{rem}
Notice the similarity between the action of $\ux$ and $A$
on the $\uomega$ variable in \eqref{natural_Fourier}
and in \eqref{w_xA}.
\end{rem}

\section{Square--Integrable Representations}
 There is an extra property that some
irreducible unitary representations have that is
critical to us. 
\begin{defi}
A unitary representation $\pi$ of a locally compact
group
$G$ is {\em square--integrable} if $\pi$ is
irreducible and there exists a nonzero 
$\eta\in\cH_\pi$
such that $V_\eta\eta\in L^2(G)$.
\end{defi}
Our primary reference for square-integrable 
representations is \cite{DM}. The following theorem
is formulated from Theorem 3 of \cite{DM} and 
the preliminary results in \cite{DM}. We organize this statement in a manner convenient for our
purposes. Also see Theorem 2.25 of \cite{Fuh}.
\begin{theorem}\label{Du_Mo} {\bf [Duflo-Moore]}
Let $\pi$ be a square-integrable representation
of a locally compact group. Let $\cD_\pi=\{\eta:
V_\eta\eta\in L^2(G)\}$. Then\\
\indent {\rm (a)} $\cD_\pi$ is a dense
linear subspace of $\cH_\pi$,\\
\indent {\rm (b)} if $\eta\in\cD_\pi$,
then $V_\eta\xi\in L^2(G)$, for all $\xi\in\cH_\pi$,
\\
\indent {\rm (c)} there exists a nonzero
selfadjoint positive operator $C_\pi$ in $\cH_\pi$
with domain $\cD_\pi$ satisfying
\[
\pi(x)C_\pi\pi(x)^*=\Delta_G(x)^{1/2}C_\pi, 
\text{ for all } x\in G,
\]
\indent {\rm (d)} for any $\xi_1,\xi_2\in\cH_\pi$
and $\eta_1,\eta_2\in\cD_\pi$,
\[
\langle V_{\eta_1}\xi_1,
V_{\eta_2}\xi_2\rangle_{L^2(G)}=
\langle\xi_1,\xi_2\rangle_{\cH_\pi}
\langle C_\pi\eta_2,C_\pi\eta_1\rangle_{\cH_\pi}
\]
Moreover, if $T$ is any densely defined nonzero
selfadjoint positive operator in $\cH_\pi$ that
satisfies the identity $\pi(x)T\pi(x)^*=\Delta_G(x)^{1/2}T$,
for all $x\in G$, then the domain of $T$ is $\cD_\pi$ 
and there exists a constant 
$r> 0$ such that $T=rC_\pi$.
\end{theorem}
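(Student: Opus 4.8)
The plan is to build everything on two covariance identities for the coefficient maps $V_\eta$. A direct computation from $V_\eta\xi(x)=\langle\xi,\pi(x)\eta\rangle_{\cH_\pi}$ gives the intertwining relation $V_\eta(\pi(x)\xi)=\lambda_G(x)(V_\eta\xi)$ pointwise on $G$; since $\lambda_G(x)$ is unitary on $L^2(G)$, the set $\mathrm{dom}(V_\eta):=\{\xi\in\cH_\pi:V_\eta\xi\in L^2(G)\}$ is a $\pi$-invariant linear subspace on which $V_\eta$ intertwines $\pi$ with $\lambda_G$. Secondly, writing $V_{\pi(x)\eta}\xi(y)=V_\eta\xi(yx)$ and invoking the modular-function identity to compute $\int_G|F(yx)|^2\,d\mu_G(y)=\Delta_G(x)^{-1}\|F\|_{L^2(G)}^2$, one sees that right translation by $x$ equals $\Delta_G(x)^{1/2}$ times a unitary on $L^2(G)$, so replacing $\eta$ by $\pi(x)\eta$ rescales $V_\eta$ in a controlled way. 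These two identities drive the whole argument.

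The analytic heart — and the step I expect to be the main obstacle — is to upgrade the defining condition $V_\eta\eta\in L^2(G)$ to the full boundedness asserted in (a) and (b): if $\eta\neq 0$ and $V_\eta\eta\in L^2(G)$, then $V_\eta$ is everywhere defined and bounded. I would argue as follows. A pointwise-convergence argument (coefficient functions converge pointwise, and $L^2$-convergence passes to an a.e.-convergent subsequence) shows $V_\eta:\mathrm{dom}(V_\eta)\to L^2(G)$ has closed graph. Its domain contains $\eta$, so it is nonzero, $\pi$-invariant, hence dense by irreducibility. Thus $V_\eta$ is closed and densely defined, and von Neumann's theorem (Section 2.7 of \cite{KR}) makes $V_\eta^*V_\eta$ a positive selfadjoint operator on $\cH_\pi$. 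The intertwining relation forces $V_\eta^*V_\eta$ to commute with every $\pi(x)$, so all its spectral projections lie in the commutant $\pi(G)'$, which is $\C I$ by Schur's lemma; hence $V_\eta^*V_\eta=c\,I$ with $c>0$ (using injectivity of $V_\eta$ for $\eta\neq0$). Therefore $V_\eta$ is bounded with $\|V_\eta\xi\|_{L^2(G)}^2=c\|\xi\|_{\cH_\pi}^2$ for all $\xi$, giving (b); and since $\cD_\pi$ then coincides with the $\pi$-invariant set of vectors $\eta$ for which $V_\eta$ is bounded, it is a dense linear subspace, giving (a). Handling the domains, cores, and affiliation of these unbounded operators carefully is where the real work lies.

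With boundedness available, (c) and (d) become structural. For $\eta_1,\eta_2\in\cD_\pi$ the bounded operator $V_{\eta_2}^*V_{\eta_1}$ commutes with $\pi$, so Schur gives $V_{\eta_2}^*V_{\eta_1}=B(\eta_1,\eta_2)\,I$ for a scalar; this $B$ is sesquilinear and positive-definite, and it immediately yields $\langle V_{\eta_1}\xi_1,V_{\eta_2}\xi_2\rangle_{L^2(G)}=B(\eta_1,\eta_2)\langle\xi_1,\xi_2\rangle_{\cH_\pi}$. To realize $B$ through an operator I fix a unit vector $\xi$ and consider the conjugate-linear map $\Phi:\eta\mapsto V_\eta\xi$; the same pointwise/a.e. argument shows $\Phi$ is closed with domain $\cD_\pi$, so $C_\pi:=(\Phi^*\Phi)^{1/2}$ is a nonzero positive selfadjoint operator with domain exactly $\cD_\pi$ satisfying $\|C_\pi\eta\|_{\cH_\pi}^2=B(\eta,\eta)$; polarization then gives $B(\eta_1,\eta_2)=\langle C_\pi\eta_2,C_\pi\eta_1\rangle_{\cH_\pi}$, which is (d). For (c), the second covariance identity yields $V_{\pi(x)\eta_2}^*V_{\pi(x)\eta_1}=\Delta_G(x)^{-1}V_{\eta_2}^*V_{\eta_1}$, i.e. $B(\pi(x)\eta_1,\pi(x)\eta_2)=\Delta_G(x)^{-1}B(\eta_1,\eta_2)$; rewriting in terms of $C_\pi$ gives $\pi(x)C_\pi^2\pi(x)^*=\Delta_G(x)C_\pi^2$, and uniqueness of positive square roots produces $\pi(x)C_\pi\pi(x)^*=\Delta_G(x)^{1/2}C_\pi$.

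Finally, for uniqueness, suppose $T\geq 0$ is selfadjoint, nonzero, densely defined, with $\pi(x)T\pi(x)^*=\Delta_G(x)^{1/2}T$. Then $C_\pi^2$ and $T^2$ are both injective (their kernels are $\pi$-invariant, hence trivial) positive selfadjoint operators satisfying $\pi(x)S\pi(x)^*=\Delta_G(x)S$. I would compare them via imaginary powers: the covariance gives $\pi(x)S^{it}\pi(x)^*=\Delta_G(x)^{it}S^{it}$, so $T^{2it}(C_\pi^2)^{-it}$ commutes with every $\pi(x)$ and is therefore a scalar $e^{i\alpha t}$ of modulus one by Schur; as a continuous one-parameter family this forces $T^2=r^2C_\pi^2$ for some $r>0$, whence $T=rC_\pi$ and the domains agree. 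The only delicate points here are justifying the imaginary-power computation and the passage to square roots for unbounded operators, both standard in spectral theory.
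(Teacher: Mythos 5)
The paper itself offers no proof of this theorem: it is imported from Duflo and Moore \cite{DM} (Theorem 3 together with the preliminary results there; see also Theorem 2.25 of \cite{Fuh}), so there is no internal argument to compare yours against. Judged on its own terms, your outline is a faithful reconstruction of the classical proof, essentially that of \cite{DM} and of Grossmann--Morlet--Paul \cite{GMP}: the two covariance identities; the pointwise/a.e.\ closed-graph argument showing each $V_\eta$ is a closed operator with dense $\pi$-invariant domain; the ``unbounded Schur'' step, in which the spectral projections of the selfadjoint operator $V_\eta^*V_\eta$ lie in the commutant $\pi(G)'=\C I$, forcing $V_\eta^*V_\eta=cI$ and hence boundedness of $V_\eta$; Schur again to produce the scalar $B(\eta_1,\eta_2)$; the polar decomposition of the conjugate-linear map $\eta\mapsto V_\eta\xi$ to manufacture $C_\pi$; and imaginary powers plus Stone's theorem for uniqueness of the semi-invariant operator. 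All of these steps are sound and standard, and they do deliver (a)--(d) and the uniqueness clause.

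Two points need tightening. First, a small inconsistency: right translation $R_xF(y)=F(yx)$ satisfies $\|R_xF\|_{L^2(G)}^2=\Delta_G(x)^{-1}\|F\|_{L^2(G)}^2$, exactly as your displayed integral says, so $R_x$ is $\Delta_G(x)^{-1/2}$ times a unitary, not $\Delta_G(x)^{1/2}$ times one; fortunately your later derivation of $B(\pi(x)\eta_1,\pi(x)\eta_2)=\Delta_G(x)^{-1}B(\eta_1,\eta_2)$ uses the integral identity rather than the misstated scaling, so the conclusion $\pi(x)C_\pi\pi(x)^*=\Delta_G(x)^{1/2}C_\pi$ comes out correctly. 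Second, and more substantively: the claim that $\Phi:\eta\mapsto V_\eta\xi$ (for one fixed unit vector $\xi$) ``is closed with domain $\cD_\pi$'' needs an extra lemma. The pointwise/a.e.\ argument shows that if $\eta_n\in\cD_\pi$, $\eta_n\to\eta$ in $\cH_\pi$, and $V_{\eta_n}\xi\to F$ in $L^2(G)$, then $F=V_\eta\xi\in L^2(G)$; but to conclude $\eta\in\cD_\pi$ you must know that $V_\eta\xi\in L^2(G)$ for this single nonzero $\xi$ already forces $V_\eta\eta\in L^2(G)$. Without that, the closure of $\Phi$ could a priori have domain strictly larger than $\cD_\pi$, and then ${\rm dom}(C_\pi)=\cD_\pi$, which is part of assertion (c), would not be established. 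The good news is that your own argument for (b) proves exactly what is needed: the closed-graph-plus-Schur step uses only that ${\rm dom}(V_\eta)$ contains one nonzero vector (there you used $\eta$ itself; here use $\xi$), and it concludes that $V_\eta$ is bounded and everywhere defined, hence in particular $V_\eta\eta\in L^2(G)$. Invoke that explicitly and the outline closes up.
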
 

When $\pi$ is a square-integrable representation
of $G$, the elements of $\cD_\pi$ are called 
{\em admissible vectors}. If $\eta\in\cD_\pi$
satisfies $\|C_\pi\eta\|_{\cH_\pi}=1$, then
$\eta$ is called a {\em $\pi$-wavelet}. When
$\eta$ is a $\pi$-wavelet and $\eta_1=\eta_2=\eta$,
then Theorem \ref{Du_Mo} (d) says, for any
$\xi_1,\xi_2\in\cH_\pi$,
\begin{equation}\label{V_eta_inner_prod}
\langle V_{\eta}\xi_1,
V_{\eta}\xi_2\rangle_{L^2(G)}=
\langle\xi_1,\xi_2\rangle_{\cH_\pi}.
\end{equation}
In particular, $V_\eta$ is a linear isometry of 
$\cH_\pi$ into $L^2(G)$. As is well-known (see for 
example \cite{GMP}, \cite{BT} or \cite{Fuh}), \eqref{V_eta_inner_prod} implies the
{\em reconstruction formula}. If $\eta$ is a 
$\pi$-wavelet, then, for any $\xi\in\cH_\pi$,
\begin{equation}\label{reconstruction}
\xi=\int_G V_\eta\xi(x)\pi(x)\eta\,d\mu_G(x),
\quad\text{weakly in }\cH_\pi.
\end{equation} 
The classical continuous wavelet transform on $\R$
arises in this manner from a square-integrable
representation of $G_1$ the group of all invertible
affine transformations of the real line (for example,
see \cite{GMP} and \cite{GMPII}). Left Haar integration on $G_1$ is given by
\[
\int_{G_1}f\,d\mu_{G_1}=
\int_{-\infty}^\infty\int_{-\infty}^\infty
f[x,a]\,\frac{dx\,da}{a^2}, \quad\text{for }
f\in C_c(G_1).
\]
Recall from Example \ref{natural_rep} that the natural representation of $G_1$ is given by, for
$[x,a]\in G_1$,
$\rho[x,a]f(t)=
|a|^{-1/2}f\left(\frac{t-x}{a}\right)$, 
for a.e. $t\in\R$
and any $f\in L^2(\R)$, and it is irreducible. 
It is also square-integrable
(see Example 3, Section 11, of \cite{KL}). A clear
and direct calculation given in the proof of
Theorem 3.3.5 of \cite{HW}, adapted from 
\cite{GMPI}, simultaneously shows that $\rho$ is
irreducible and square-integrable. Note that 
Theorem 3.3.5 \cite{HW} concerns the subgroup of
$G_1$ consisting of elements $[x,a]$ with $a>0$
parametrized by $(u,v)\to[v,e^u]$.
But a slightly modified calculation works 
for all of $G_1$
and yields the key identity:
\begin{equation}\label{G_1_identity}
\int_{-\infty}^\infty\int_{-\infty}^\infty|
\langle f,\rho[x,a]g\rangle|^2\,\frac{dx\,da}{a^2}
= \|f\|_2^{\,2}\int_{-\infty}^\infty 
\frac{|\widehat{g}(\omega)|^2}{|\omega|}\,d\omega,
\text{ for }f,g\in L^2(\R).
\end{equation}
So $V_g$ is injective as long as $g\neq 0$. This is 
another way of showing $\rho$ is irreducible. Moreover, $\rho$ is
square-integrable, because
$V_gf\in L^2(G_1)$, for all $f\in L^2(\R)$,
exactly when $\int_{-\infty}^\infty 
\frac{|\widehat{g}(\omega)|^2}{|\omega|}\,d\omega$
is finite and we see that 
$g\in L^2(\R)$ is a $\rho$-wavelet
when $\int_{-\infty}^\infty 
\frac{|\widehat{g}(\omega)|^2}{|\omega|}\,d\omega=1$.
This line of reasoning was generalized to
certain closed subgroups of higher dimensional affine
groups in \cite{BT} and \cite{Fuh2}. 

Let $H$ be a closed subgroup of $\gln$ and 
consider the natural representation of 
$\R^n\rtimes H$ on $L^2(\R^n)$ as in Example \ref{natural_rep}.  
The group $H$ acts on $\widehat{\R^n}$ on the right
by matrix multiplication. The {\em $H$--orbit} of
$\uomega\in\widehat{\R^n}$ is
$\uomega H=\{\uomega A:A\in H\}$
and the {\em stability subgroup} in $H$ of 
$\uomega$ is 
$H_{\uomega}=\{A\in H:\uomega A=\uomega\}$.
See Theorem 3.1 of 
\cite{CFO} and the references therein for the
following result.
\begin{theorem}\label{Fuhr_1}
Let $H$ be a closed subgroup of $\gln$ and let
$\rho$ be the representation of $\R^n\rtimes H$
given in \eqref{natural}. 
Then $\rho$ is a direct sum of 
square-integrable representations
if and only if there exists an
$\uomega\in\widehat{\R^n}$ such that $\uomega H$ is
open in $\widehat{\R^n}$ and 
$H_{\uomega}$ is compact.
\end{theorem}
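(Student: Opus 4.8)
The plan is to transport everything to the Fourier realization $\widehat\rho$ of \eqref{natural_Fourier} and reduce the theorem to the geometry of the right $H$-action on $\widehat{\R^n}$, just as the identity \eqref{G_1_identity} does for $G_1$. The heart of the matter is one clean computation. For $f,g\in L^2(\R^n)$ we have $\langle f,\rho[\ux,A]g\rangle=\langle\widehat f,\widehat\rho[\ux,A]\widehat g\rangle$; applying the Plancherel theorem in the translation variable $\ux$ to \eqref{natural_Fourier} and then Fubini yields
\begin{equation}\label{orbit_identity}
\int_H\int_{\R^n}\big|\langle f,\rho[\ux,A]g\rangle\big|^2\,\frac{d\ux\,d\mu_H(A)}{|\det(A)|}=\int_{\widehat{\R^n}}|\widehat f(\uomega)|^2\,\Phi_g(\uomega)\,d\uomega,
\end{equation}
where $\Phi_g(\uomega)=\int_H|\widehat g(\uomega A)|^2\,d\mu_H(A)$ and $\frac{d\ux\,d\mu_H(A)}{|\det(A)|}$ is the left Haar measure of $\R^n\rtimes H$ (the factors $|\det(A)|$ cancel). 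The function $\Phi_g$ is constant on each $H$-orbit: replacing $\uomega$ by $\uomega B$ and substituting $A\mapsto B^{-1}A$ leaves the integral unchanged by left invariance of $\mu_H$. Thus \eqref{orbit_identity} shows that admissibility of $g$ is decided orbit by orbit.

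Next I would analyze $\Phi_g$ along a fixed orbit. The orbit map $A\mapsto\uomega A$ identifies the orbit with the homogeneous space $H_{\uomega}\backslash H$, and Weil's quotient integral formula rewrites $\Phi_g(\uomega)$ as an integral over $H_{\uomega}\backslash H$ whose inner fibre integral over $H_{\uomega}$ contributes the factor $\mu_{H_{\uomega}}(H_{\uomega})$, since $\widehat g(\uomega kA)=\widehat g(\uomega A)$ for $k\in H_{\uomega}$. Hence one can choose $\widehat g$ to be a bump supported on a compact piece of the orbit and obtain $0<\Phi_g<\infty$ precisely when the fibre has finite mass, i.e. when $H_{\uomega}$ is compact; conversely, if $H_{\uomega}$ is noncompact then $\Phi_g\equiv\infty$ on the orbit for every $g$ with $\widehat g$ nonzero there, so no such $g$ is admissible. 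Combined with Example \ref{natural_rep}, which shows that the restriction of $\widehat\rho$ to the functions supported on a single orbit of positive Lebesgue measure is irreducible, this identifies the square-integrable pieces: they are exactly the subrepresentations carried by orbits of positive measure with compact stabiliser. Since orbits are immersed submanifolds of $\widehat{\R^n}$ and an $n$-dimensional one is open, an orbit has positive measure if and only if it is open, so "positive measure" may be replaced by "open" throughout.

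It remains to see how the discrete (direct sum) decomposition is governed by these orbits. Using a measurable cross-section for the action one disintegrates Lebesgue measure on $\widehat{\R^n}$ over the orbit space and writes $\widehat\rho$ as a direct integral of the orbit subrepresentations; this is a genuine direct sum of irreducibles exactly when the orbit-space measure is purely atomic, that is, when almost every $\uomega$ lies in an open orbit. The forward implication of the theorem is then immediate: a square-integrable summand is irreducible and is, up to a null set, supported on a single open orbit, and by the previous paragraph its stabiliser must be compact, producing the required $\uomega$.

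For the converse I expect the main obstacle to be the propagation of a single good orbit to a co-null family: given one open orbit with compact stabiliser, one must show that the set of $\uomega$ whose orbit fails to be open, or whose stabiliser fails to be compact, is Lebesgue null, so that \emph{every} summand in the atomic decomposition is square-integrable. This rests on the structure theory of the dual action — upper semicontinuity of the stabiliser dimension and the fact that the exceptional locus is lower dimensional — together with the Plancherel theory for group extensions that ties square-integrability of the constituents to compact stabilisers. This is precisely the content imported from \cite{KL} and packaged in Theorem 3.1 of \cite{CFO} (see also \cite{Fuh}), and it is where I would invoke those references rather than reprove the general extension theory.
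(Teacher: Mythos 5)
First, a point of comparison: the paper offers no proof of Theorem \ref{Fuhr_1} at all --- it is quoted from Theorem 3.1 of \cite{CFO}, with that paper and its references carrying the argument --- so there is no internal proof to measure your proposal against. Judged on its own terms, the first half of your proposal is sound and is indeed the standard route (it is how \cite{BT} and \cite{Fuh} proceed): your Calder\'on-type identity with $\Phi_g(\uomega)=\int_H|\widehat g(\uomega A)|^2\,d\mu_H(A)$ is correct, $\Phi_g$ is constant on orbits, each orbit is either open or Lebesgue-null, and the forward implication follows as you say. The fatal defect is in the converse: you discharge it by citing ``Theorem 3.1 of \cite{CFO}'' --- but Theorem \ref{Fuhr_1} \emph{is} Theorem 3.1 of \cite{CFO}. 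Your argument is circular at exactly the one step that is not routine.

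Moreover, the propagation principle you would need there --- that a single open orbit with compact stabilizer forces the exceptional set to be null --- is not merely unproved; it is false, so the converse cannot be repaired along the lines you propose for the statement as literally written. Take $n=3$ and $H=\R^+\cdot{\rm SO}(2,1)^0$, a closed connected subgroup of ${\rm GL}_3(\R)$. Writing $q(\uomega)=\omega_1^2+\omega_2^2-\omega_3^2$, the dual action has exactly three open orbits: the two cones $\{q<0,\,\omega_3>0\}$ and $\{q<0,\,\omega_3<0\}$, whose stabilizers are conjugate to ${\rm SO}(2)$ and hence compact, and the region $\{q>0\}$, whose stabilizer is the one-parameter boost group fixing a spacelike vector, isomorphic to $\R$ and hence noncompact; the light cone is null. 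So an $\uomega$ with $\uomega H$ open and $H_{\uomega}$ compact certainly exists. Yet $\{q>0\}$ has positive measure, the subrepresentation of $\widehat\rho$ (as in \eqref{natural_Fourier}) on $L^2(\{q>0\})$ is irreducible (the action there is transitive, hence ergodic), and by exactly your fibre computation it admits no nonzero admissible vectors, since the inner integral over the noncompact stabilizer diverges for every $g$ that is nonzero on that orbit. An irreducible subrepresentation that is not square-integrable cannot be equivalent to any summand of a direct sum of square-integrable representations (Schur), so $\rho$ is not such a direct sum. What is actually true, and what \cite{Fuh} and \cite{CFO} prove --- and what your direct-integral reduction naturally delivers --- is the version in which ``there exists $\uomega$'' is replaced by ``for almost every $\uomega$'', i.e.\ the open orbits with compact stabilizers must have co-null union. (The discrepancy is harmless for this paper's applications, which use the theorem only for $n=2$, where the classification in \cite{Fuh3} makes the two formulations coincide; but your proof should target, and can only ever prove, the almost-everywhere formulation.)
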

If $H$ is such that $\rho$ is square-integrable
and if $H_0$ is an open subgroup of $H$, then 
the restriction of $\rho$ to $\R^2\rtimes H_0$
will be a direct sum of finitely many 
irreducible representations, each of which will 
be square-integrable. Classifications, up to 
conjugation, of all connected subgroups of $\gln$
with open orbits and compact stability subgroups
for points in the open orbits are given in
\cite{Fuh3} for $n=2$ and \cite{CFO} for $n=3$.
With a small amount of consideration, we can 
formulate the following list based on \cite{Fuh3}.
In all cases $a$ and $b$ are real.
\[
H^D=\left\{\begin{pmatrix}
a & 0\\
0 & b
\end{pmatrix}:ab\neq0\right\},
H^S=\left\{\begin{pmatrix}
a & b\\
-b & a
\end{pmatrix}:a^2+b^2>0\right\},
H^\alpha=\left\{\begin{pmatrix}
a & b\\
0 & |a|^\alpha
\end{pmatrix}:a\neq0\right\},
\]
where $\alpha\in\R$. Note that we have modified
the list given in \cite{Fuh3} or \cite{CFO} to
identify those $H$ with a co-null $H$--orbit in
$\widehat{\R^2}$. Each of these just has a small
number of open subgroups that are easy to 
identify.
\begin{prop}\label{List_2_d}
Let $H$ be a closed subgroup of $\gl2$ and let $\rho$ be the natural representation of $\R^2\rtimes H$
as given in \eqref{natural}. Then $\rho$ is a 
square-integrable representation
if and only if $H$ is conjugate to 
one of $H^D$, $H^S$, or $H^\alpha$, for some $\alpha\in\R$. Moreover, $\rho$ is a direct sum of
square-integrable representations if and only if
$H$ is conjugate to an open subgroup of some
member of this list.
\end{prop}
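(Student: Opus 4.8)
The plan is to translate square-integrability of $\rho$ into a statement about the $H$-orbits in $\widehat{\R^2}$ and then invoke the classification of \cite{Fuh3}. First I would pass to the Fourier side and work with $\widehat{\rho}$ from Example \ref{natural_rep}, where the closed invariant subspaces are exactly the $\cK_E$ for Borel sets $E\subseteq\widehat{\R^2}$ that are $H$-invariant up to null sets. Thus $\rho\cong\widehat{\rho}$ is irreducible precisely when the only such $E$ are null or co-null; since $H$ is closed in $\gl2$ its orbits are locally closed (semialgebraic) sets, so the partition of $\widehat{\R^2}$ into orbits is countably separated, and this condition holds if and only if some single $H$-orbit is co-null. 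Square-integrability is irreducibility together with an admissible vector, so combining this with Theorem \ref{Fuhr_1} I would record two working criteria: $\rho$ is a direct sum of square-integrable representations iff some $H$-orbit is open with compact stabilizer (this is Theorem \ref{Fuhr_1} verbatim), and $\rho$ is square-integrable iff moreover that orbit is co-null. A short disjointness argument shows these refer to the same orbit: a co-null orbit and an open orbit both have positive measure, so they cannot be distinct.

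Next I would reduce to connected groups. Write $H_0$ for the identity component, which is open and closed in $H$. Openness of an orbit is detected by the Lie algebra, so $\uomega H$ is open iff $\uomega H_0$ is open; and since $(H_0)_{\uomega}=H_{\uomega}\cap H_0$ is closed in the compact group $H_{\uomega}$, it too is compact. Hence whenever the orbit condition holds for $H$, the connected closed group $H_0$ has an open orbit with compact point stabilizer, and the classification of \cite{Fuh3} applies: up to conjugacy in $\gl2$, $H_0$ is the identity component of one of $H^D$, $H^S$, $H^\alpha$. After conjugating I may assume $H_0=(H^X)_0$ for some $X\in\{D,S,\alpha\}$.

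Then I would reconstruct $H$ from its identity component. Since $H_0$ is characteristic in $H$, we have $H_0\le H\le N_{\gl2}(H_0)$ with $H_0$ open in $H$, so $H$ is determined by the discrete subgroup $H/H_0$ of $N_{\gl2}(H_0)/H_0$. The plan is to compute each normalizer $N_{\gl2}\big((H^X)_0\big)$ explicitly and, for every intermediate closed group $H_0\le H\le N_{\gl2}(H_0)$, determine the orbit $\uomega H$ (a union of translates of the $H_0$-orbit) together with its stabilizer. For the \emph{direct sum} statement I keep all $H$ whose orbit is open with compact stabilizer; these turn out to be exactly the conjugates of the open subgroups of $H^D$, $H^S$, $H^\alpha$. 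For the \emph{square-integrability} statement I retain only those $H$ whose orbit is co-null, and verify that, up to conjugacy, these are precisely $H^D$, $H^S$, and $H^\alpha$ themselves, each with trivial (hence compact) stabilizer, matching the modified list recorded before the statement.

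The main obstacle is this last, disconnected, step. One must pin down the normalizers $N_{\gl2}\big((H^X)_0\big)$ and then, extension by extension, decide whether adjoining new components enlarges the $H_0$-orbit, changes the stabilizer, or both. The delicate cases are the sign-change and coordinate-permutation extensions: the sign changes that carry $(\R^+)^2$ to $H^D$ sweep the open quadrant orbit across all four quadrants to fill a co-null set, whereas other adjunctions may fix the orbit setwise while only enlarging the stabilizer, and still others may break compactness of the stabilizer when $H/H_0$ is infinite. Determining exactly which closed extensions yield an open orbit (for the direct-sum clause) versus a co-null orbit with compact stabilizer (for the square-integrability clause), and confirming that the co-null case produces only $H^D$, $H^S$, $H^\alpha$ up to conjugacy, is where the real work lies; by contrast the representation-theoretic reductions of the first paragraph and the appeals to Theorem \ref{Fuhr_1} and \cite{Fuh3} are comparatively routine.
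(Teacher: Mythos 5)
Your overall route --- reduce square-integrability to an orbit condition, quote Theorem \ref{Fuhr_1}, invoke the classification of connected groups in \cite{Fuh3}, then handle disconnected groups by hand --- is exactly the route the paper gestures at (the paper in fact offers no written proof, only the phrase ``with a small amount of consideration''). But two steps of your plan are genuinely broken. First, your claim that a closed subgroup of $\gl2$ has locally closed (``semialgebraic'') orbits, so that the orbit partition is countably separated, is false: closed subgroups need not be algebraic, and the paper's own remark following Theorem \ref{List_2_d_2} notes that the closed subgroup ${\rm SL}_2(\Z)$ has dense, hence not locally closed, orbits in $\widehat{\R^2}$. Without countable separation, ergodicity of the $H$-action (which is what irreducibility of $\widehat{\rho}$ amounts to, via the invariant-subspace correspondence of Example \ref{natural_rep}) does not imply the existence of a co-null orbit. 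This particular error is repairable: if $\rho$ is square-integrable it is trivially a direct sum, with one summand, of square-integrable representations, so Theorem \ref{Fuhr_1} supplies an open orbit with compact stabilizer; that orbit is $H$-invariant of positive measure, so irreducibility forces it to be co-null. But as written the step rests on a statement the paper explicitly warns against.

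The second problem is decisive. The classification of disconnected extensions --- by your own account ``where the real work lies'' --- is never carried out; its outcome is simply asserted, and the asserted outcome is wrong. Consider the normalizer of the diagonal group,
\[
N=\left\{\begin{pmatrix} a & 0\\ 0 & b \end{pmatrix}:ab\neq 0\right\}
\cup\left\{\begin{pmatrix} 0 & c\\ d & 0 \end{pmatrix}:cd\neq 0\right\},
\]
a closed subgroup of $\gl2$. The $N$-orbit of $(1,1)$ is $\{\uomega:\omega_1\omega_2\neq 0\}$, which is open and co-null, and the stabilizer of $(1,1)$ is $\left\{{\rm id},\left(\begin{smallmatrix}0&1\\1&0\end{smallmatrix}\right)\right\}$, which is compact. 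By your own working criteria --- equivalently, by Theorem \ref{Fuhr_1}, which carries no connectedness hypothesis, together with irreducibility coming from the co-null orbit --- the natural representation of $\R^2\rtimes N$ is square-integrable. Yet $N$ has eight connected components, whereas $H^D$, $H^S$, $H^\alpha$ and all of their open subgroups have at most four; since conjugation is a homeomorphism, $N$ is conjugate to none of them. The same phenomenon occurs for the full similitude group $H^S\cup H^S\,{\rm diag}(1,-1)$, whose stabilizer of $(1,0)$ is $\{{\rm id},{\rm diag}(1,-1)\}$, and for $H^\alpha$ extended by ${\rm diag}(1,-1)$: adjoining components can enlarge a trivial stabilizer to a finite one while keeping it compact and keeping the orbit co-null. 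So executing your third paragraph faithfully would not ``turn out'' to confirm the stated list; it would produce finite extensions that the list omits. The intrinsic characterization your method actually proves is ``some $H$-orbit is co-null with compact stabilizer,'' and the passage from that criterion to the specific conjugacy list is precisely where your plan (and, one should note, the proposition's own ``small amount of consideration'') fails to close.
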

Each of the {\em diagonal group} $H^D$, the 
{\em similitude group} $H^S$, and the 
{\em shearlet groups} $H^\alpha$ and the
associated continuous wavelet transforms, or discrete
versions, are useful in image processing and
analysis of data that is two dimensional in some sense. Proposition \ref{List_2_d} provides the
answer to the question of which closed subgroups
$H$ of $\gl2$ are such that the natural
representation of $\R^2\rtimes H$ on $L^2(\R^2)$
is square-integrable. However, if instead we ask 
which closed subgroups $H$ of $\gl2$ are such that
$\R^2\rtimes H$ has a square-integrable representation, we can use a result from \cite{KL}
to enlarge the list. It is not a perfect answer
because the tools used in \cite{KL} depend on
$H$ acting on $\widehat{\R^2}$ in a well-behaved manner.
Recall that a subset $Y$ of a topological space 
$X$ is {\em locally closed} if $Y$ is an open
subset of $\overline{Y}$.
\begin{theorem}\label{List_2_d_2}
Let $H$ be a closed subgroup of $\gl2$ with the
property that each $H$-orbit in $\widehat{\R^2}$
is locally closed. There 
exists a square-integrable representation of 
$\R^2\rtimes H$ if and only if $H$ is an open
subgroup of one of $H^D$, $H^S$, $H^\alpha$, for 
some $\alpha\in\R$, or $\gl2$ itself.
\end{theorem}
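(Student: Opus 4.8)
The plan is to convert the existence of a square--integrable representation of $\R^2\rtimes H$ into a condition on the $H$--action on $\widehat{\R^2}$ and then read off the classification. The assumption that every $H$--orbit is locally closed is exactly the regularity hypothesis needed for Mackey's little--group method to describe $\widehat{\R^2\rtimes H}$: each irreducible representation has the form $\mathrm{Ind}_{\R^2\rtimes H_{\uomega}}^{\,\R^2\rtimes H}(\uomega\cdot\tau)$ for a character $\uomega\in\widehat{\R^2}$ and an irreducible representation $\tau$ of the stabilizer $H_{\uomega}=\{A\in H:\uomega A=\uomega\}$. The key result I would draw from \cite{KL} is the criterion that such an induced representation is square--integrable precisely when the orbit $\uomega H$ is open in $\widehat{\R^2}$ and $\tau$ is square--integrable on $H_{\uomega}$. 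Since the isomorphism $[\ux,A]\mapsto[g\ux,gAg^{-1}]$ shows that possessing a square--integrable representation is invariant under conjugating $H$ by any $g\in\gl2$, the theorem reduces to the purely geometric assertion: $H$ admits an open orbit in $\widehat{\R^2}$ whose stabilizer carries a square--integrable representation if and only if $H$ is conjugate to an open subgroup of $H^D$, $H^S$, $H^\alpha$, or $\gl2$.

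For the sufficiency direction I would split into two cases. If $H$ is conjugate to an open subgroup of $H^D$, $H^S$, or $H^\alpha$, then Proposition \ref{List_2_d} already gives that the natural representation is a direct sum of square--integrable representations, so at least one exists. If instead $H$ is an open subgroup of $\gl2$ --- that is, $\gl2$ or $\mathrm{GL}_2^+(\R)$, its only open subgroups --- then $\widehat{\R^2}\setminus\{\u0\}$ is a single open orbit, and a direct computation shows the stabilizer of $\uomega=(1,0)$ is $\{\begin{pmatrix}1&0\\c&d\end{pmatrix}:d\neq0\}$ (respectively $d>0$), which is isomorphic to the affine group $G_1$ (respectively to its identity component $\R\rtimes\R^+$). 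Both of these carry square--integrable representations --- precisely the one underlying \eqref{G_1_identity} --- so the \cite{KL} criterion yields a square--integrable representation of $\R^2\rtimes H$.

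For the necessity direction, suppose $\R^2\rtimes H$ has a square--integrable representation. By the \cite{KL} criterion there is an open orbit $\uomega H$ whose stabilizer $H_{\uomega}$ admits a square--integrable representation. Because $H$ is a Lie group and $\uomega H$ is open, the orbit--stabilizer identification $\uomega H\cong H/H_{\uomega}$ forces $\dim H=2+\dim H_{\uomega}$, while $H_{\uomega}$ lies inside the full $\gl2$--stabilizer of $\uomega$, a copy of $G_1$, so $\dim H_{\uomega}\in\{0,1,2\}$. The value $\dim H_{\uomega}=1$ is excluded by applying the same open--orbit criterion to $H_{\uomega}$ itself: a one--dimensional closed subgroup of $G_1$ is an extension of $\R$ by a discrete group of dilations, whose dual action on $\widehat{\R}$ has only countable --- hence never open --- orbits, so $H_{\uomega}$ has no square--integrable representation. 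If $\dim H_{\uomega}=2$ then $H_{\uomega}$ is open in $G_1$, so $\dim H=4$ and $H$ is open in $\gl2$. If $\dim H_{\uomega}=0$ then $\dim H=2$; a non--compact (hence infinite amenable) discrete stabilizer again admits no square--integrable representation, so $H_{\uomega}$ is compact, and Theorem \ref{Fuhr_1} together with the classification behind Proposition \ref{List_2_d} (from \cite{Fuh3}) places $H$ among the conjugates of open subgroups of $H^D$, $H^S$, $H^\alpha$.

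The main obstacle I anticipate is isolating and justifying the precise square--integrability criterion from \cite{KL} and confirming that the locally--closed--orbit hypothesis supplies exactly the regularity that makes Mackey's parametrization valid, with the stabilizer condition genuinely being ``$\tau$ square--integrable on $H_{\uomega}$'' free of any spurious modular twist. The secondary delicate points are the recursive exclusions in the converse --- ruling out one--dimensional stabilizers and non--compact discrete stabilizers --- where I must ensure that the open--orbit obstruction and the absence of discrete series for amenable groups are invoked correctly.
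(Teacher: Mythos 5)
Your proposal follows essentially the same route as the paper's proof: Mackey's little--group analysis (justified by the locally--closed--orbit hypothesis), the square--integrability criterion of Kleppner--Lipsman (Corollary 11.1 of \cite{KL}), a dimension count on stabilizers, Theorem \ref{Fuhr_1} together with Proposition \ref{List_2_d} when $\dim H<4$, and the identification $H_{(1,0)}\cong G_1$ (respectively $G_1^+$) when $H$ is $\gl2$ (respectively $\gl2^+$). One bookkeeping point: the criterion in \cite{KL} is stated in terms of the orbit having \emph{positive Lebesgue measure}, not openness; the paper uses local closedness to see that an orbit is a submanifold, hence either open or null, and you should make that bridging step explicit rather than folding it into the statement of the criterion.

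Two of your auxiliary exclusion arguments in the necessity direction need repair, even though your conclusions agree with the paper (which simply asserts, without proof, that a discrete or one--dimensional $H_{\uomega}$ possessing a square--integrable representation must be compact). First, it is not true that every one--dimensional closed subgroup of $G_1$ is an extension of the translation group $\R$ by a discrete group of dilations: the identity component may instead be a conjugate of the dilation subgroup, in which case $H_{\uomega}$ is isomorphic to $\R^+$ or $\R^*$; these subgroups are abelian and noncompact, so they are excluded because their irreducible representations are characters, which are never square--integrable --- your countable--orbit argument does not apply to them. Second, the principle you invoke for discrete stabilizers, the ``absence of discrete series for amenable groups,'' is false as stated: $G_1$ itself is solvable, hence amenable, and has square--integrable representations --- that fact underlies the entire paper. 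What is true, and what suffices here, is that an \emph{infinite discrete} group has no square--integrable irreducible representations: for a discrete group such a representation would be a subrepresentation of the regular representation $\lambda_\Gamma$; since the group von Neumann algebra of a discrete group is finite, any irreducible subrepresentation of $\lambda_\Gamma$ is finite--dimensional, and finite--dimensional representations of an infinite discrete group never have matrix coefficients vanishing at infinity. (Alternatively, the discrete subgroups of $G_1$ arising here are virtually cyclic, and one can argue directly that their regular representations have no irreducible subrepresentations.) With these two repairs your argument is complete and coincides in structure with the paper's.
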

\begin{proof}
The assumption that the $H$-orbits are locally closed
in $\widehat{\R^2}$ means that Mackey's theory of
induced representations, as developed in \cite{Mac}
can be used to describe all the irreducible
representations of $\R^2\rtimes H$. In particular,
the hypotheses of Theorem 4.29 of \cite{KT} are
satisfied. Thus, for any irreducible representation
$\pi$ of $\R^2\rtimes H$, there exists an $\uomega
\in\widehat{\R^2}$ and an irreducible representation
$\psi$ of the stabilizer $H_{\uomega}$ such that
$\pi$ is equivalent to the induced representation
${\rm ind}_{\R^2\rtimes 
H_{\uomega}}^{\R^2\rtimes H}(\chi_{\uomega}\otimes
\psi)$. The representation $\chi_{\uomega}\otimes
\psi$ of $\R^2\rtimes H_{\uomega}$ is given by
$(\chi_{\uomega}\otimes\psi)[\ux,A]=
e^{2\pi i\uomega\ux}\psi(A)$, for all 
$[\ux,A]\in \R^2\rtimes H_{\uomega}$. For the definition and basic properties of induced representations, one can consult \cite{Fol} or \cite{KT}, but this is not essential for anything
in this paper beyond the current section. By
Corollary 11.1 of \cite{KL}, ${\rm ind}_{\R^2\rtimes 
H_{\uomega}}^{\R^2\rtimes H}(\chi_{\uomega}\otimes
\psi)$ is square-integrable if and only if 
$\psi$ is a square-integrable representation of
$H_{\uomega}$ and the orbit $\uomega H$ has
positive Lebesgue measure in $\widehat{\R^2}$.
Since $H_{\uomega}$ is a closed subgroup of 
$\gl2$ and the orbit $\uomega H$ is locally closed,
we have that $\uomega H$ is a sub-manifold of 
$\widehat{\R^2}$. Thus, either $\uomega H$ is
open or it is a null set. Therefore, if 
$\R^2\rtimes H$ has a square--integrable 
representation, then there exists an $\uomega
\in\widehat{\R^2}$ such that $\uomega H$ is open
and $H_{\uomega}$ has a square--integrable
representation. We now consider the dimension, 
$\dim(H)$, of
$H$ as a differentiable manifold.

Suppose $\uomega\in\widehat{\R^2}$ is such that
$\uomega H$ is open in $\widehat{\R^2}$. The map
$AH_{\uomega}\to \uomega A^{-1}$ is a diffeomorphism
of $H/H_{\uomega}$ with $\uomega H$. Thus,
$\dim(H_{\uomega})=\dim(H)-2$. If $\dim(H)$ is
2 or 3, then $H_{\uomega}$ is either discrete 
or one dimensional. In either case, if $H_{\uomega}$
has a square--integrable representation, it is compact. By Theorem \ref{Fuhr_1} and Proposition
\ref{List_2_d}, when $\dim{H}<4$ and $\R^2\rtimes H$
has a square--integrable representation, $H$ is
conjugate to an open subgroup of one of $H^D$, 
$H^S$, or $H^\alpha$, for some $\alpha\in\R$.

Finally, if $\dim{H}=4$, then $H$ is an open subgroup
of $\gl2$. So, either $H=\gl2$ or $H=\gl2^+=
\{A\in\gl2:\det(A)>0\}$. We will deal with 
$H=\gl2$, then mention the variation for $\gl2^+$. There are
just two $\gl2$-orbits in $\widehat{\R^2}$, $\{(0,0)\}$
and $\cO=\widehat{\R^2}\setminus\{(0,0)\}$. It is
convenient to use $\uomega=(1,0)$ as a representative
point in $\cO$. For $A=\begin{pmatrix}
a & b\\
c & d
\end{pmatrix}\in H=\gl2$, $(1,0)A=(a,b)$, so $A\in 
H_{(1,0)}$ if and only $a=1$ and $b=0$. Thus,
 $H_{(1,0)}=\left\{\begin{pmatrix}
1 & 0\\
u & v
\end{pmatrix}:u\in\R,v\in\R^*\right\}$. The map
$\begin{pmatrix}
1 & 0\\
u & v
\end{pmatrix}\to[u,v]$ is easily seen to be a
topological group isomorphism of $H_{(1,0)}$
with $G_1$. As we saw above, the natural
representation of $G_1$ is square--integrable.
Therefore, since $\cO$ is an open $H$-orbit, 
Corollary 11.1 of \cite{KL} implies there exists
a square--integrable representation of 
$\R^2\rtimes H=G_2$. If $H=\gl2^+$, then 
$H_{(1,0)}$ is isomorphic to $G_1^+=\{[b,a]:b\in\R,a\in\R^+\}$ and $G_1^+$ has two inequivalent
square--integrable representations (see \cite{BT},
for example). Thus, Corollary 11.1 of \cite{KL}
implies $\R^2\rtimes H$ has two 
square--integrable representations.
\end{proof}
\begin{rem}
There are closed subgroups of $\gl2$ which have 
some orbits that are not locally closed. For example,
there are ${\rm SL}_2(\Z)$-orbits that are dense in
$\widehat{\R^2}$ (see \cite{GW}). Such an orbit 
is not locally closed. The
question of whether or not 
$\R^2\rtimes{\rm SL}_2(\Z)$ has a square--integrable
representation will not be treated here.
\end{rem}

\section{A Particular Square-integrable Representation}

From the proof of Theorem \ref{List_2_d_2}, we
see that $G_2=\R^2\rtimes\gl2$ has a square--integrable representation. It arises as 
${\rm ind}_{\R^2\rtimes H_{(1,0)}}^{G_2}(\chi_{(1,0)}
\otimes\rho)$, where $\rho$ is the natural
representation of $G_1$ transferred to $H_{(1,0)}$.
Note that $\chi_{(1,0)}\otimes\rho$ is a representation
of $\R^2\rtimes H_{(1,0)}$ acting on $L^2(\R)$ by,
for $\left[\begin{pmatrix}
x_1\\
x_2
\end{pmatrix},\begin{pmatrix}
1 & 0\\
u & v
\end{pmatrix}\right]\in \R^2\rtimes H_{(1,0)}$ and
$f\in L^2(\R)$,
\[
(\chi_{(1,0)}\otimes\rho)\left[\begin{pmatrix}
x_1\\
x_2
\end{pmatrix},\begin{pmatrix}
1 & 0\\
u & v
\end{pmatrix}\right]f(t)=
|v|^{-1/2}e^{2\pi ix_1}f\left(\frac{t-u}{v}\right),
\text{ for a.e. } t\in\R.
\]
The inducing construction, as presented in 
Section 6.1 of \cite{Fol} or Section 2.3 of 
\cite{KT}, is designed to work in the most general 
of situations. As a result, the definition of 
the Hilbert space of the induced representation
is not transparent. Much of the work in 
\cite{MT} involves converting, via unitary maps, that abstract Hilbert space into a concrete 
$L^2$-space and tracking how 
${\rm ind}_{\R^2\rtimes H_{(1,0)}}^{G_2}(\chi_{(1,0)}
\otimes\rho)$ is transformed in that process. The
calculations in \cite{MT} make use of the fact that
the subgroup $H_{(1,0)}$ of $\gl2$ has a complementary
subgroup. Let $K_0=\left\{\begin{pmatrix}
s & -t\\
t & s
\end{pmatrix}:s,t\in\R,s^2+t^2>0\right\}$. Then
$K_0$ is a closed subgroup of $\gl2$. Note that the map
$\begin{pmatrix}
s & -t\\
t & s
\end{pmatrix}\to s+it$ is an isomorphism of $K_0$
with $\C^*=\C\setminus\{0\}$, the multiplicative group
of nonzero complex numbers. Then $K_0\cap H_{(1,0)}=
\{{\rm id}\}$ and $(M,C)\to MC$ is a homeomorphism
of $K_0\times H_{(1,0)}$.

This results in a concrete
square--integrable representation,
which we denote as $\sigma$, of $G_2$
acting\footnote{In \cite{MT}, the Hilbert space of
$\sigma$ is given as $L^2\big(K,L^2(\R^*)\big)$,
where $K=\{[\u0,L]:L\in K_0\}$. The change from $K$ to
$K_0$ is essentially notational.} on the Hilbert space 
$L^2\big(K_0,L^2(\R^*)\big)$, where $K_0$ and $\R^*$
are equipped with their respective Haar integrals. 
For $f\in C_c(\R^*)$, $\int_{\R^*}f\,d\mu_{\R^*}=
\int_{-\infty}^\infty f(\nu)\frac{d\nu}{|\nu|}$, while 
for $f\in C_c(K_0)$, we have
$\int_{K_0}f\,d\mu_{K_0}=
\int_{-\infty}^\infty \int_{-\infty}^\infty
f\begin{pmatrix}
s & -t\\
t & s
\end{pmatrix}
\frac{ds\,dt}{s^2+t^2}$.
The expression for $\sigma$ is greatly
simplified by defining two 
rational functions of six variables. 
For each $\uomega=(\omega_1,\omega_2)
\in\cO=
\widehat{\R^2}\setminus\{(0,0)\}$ and 
$A=\begin{pmatrix}
a & b\\
c & d
\end{pmatrix}\in\gl2$, let
\begin{equation}\label{uv}
\begin{split}
u_{\uomega,A} & 
=\frac{(ac+bd)(\omega_1^2-\omega_2^2)-(a^2+b^2-c^2-d^2)\omega_1\omega_2}{(a\omega_1+c\omega_2)^2+(b\omega_1+d\omega_2)^2} \text{ and}\\
v_{\uomega,A} & 
=\frac{(ad-bc)(\omega_1^2+\omega_2^2)}{(a\omega_1+c\omega_2)^2+(b\omega_1+d\omega_2)^2}
=\frac{\det(A)\|\uomega\|^2}{\|\uomega A\|^2}.
\end{split}
\end{equation} 
These functions arose during calculations in \cite{Mil}
involving the group structure of $\gl2$ and 
the map $\gamma:\cO\to K_0$ given by 
$\gamma(\uomega)=\frac{1}{\|\uomega\|^2}
\begin{pmatrix}
\omega_1 & -\omega_2\\
\omega_2 & \omega_1
\end{pmatrix}$, for all $\uomega\in\cO$. What was useful about this map was that $\gamma(\uomega)^{-1}
=\begin{pmatrix}
\omega_1 & \omega_2\\
-\omega_2 & \omega_1
\end{pmatrix}$, so 
$(1,0)\gamma(\uomega)^{-1}=\uomega$ and 
$\uomega\gamma(\uomega)=(1,0)$. Thus, for any
$A\in\gl2$,
\begin{equation}\label{stabilize_1_0}
(1,0)\big(\gamma(\uomega)^{-1}A\gamma(\uomega A)\big)
=\uomega A\gamma(\uomega A)=(1,0).
\end{equation}
Recall that $H_{(1,0)}=\left\{\begin{pmatrix}
1 & 0\\
u & v
\end{pmatrix}:u\in\R,v\in\R^*\right\}$ is the
stabilizer subgroup in $\gl2$ of the point $(1,0)$
in $\widehat{\R^2}$. The calculation in \eqref{stabilize_1_0} shows that 
$\gamma(\uomega)^{-1}A\gamma(\uomega A)\in H_{(1,0)}$. When you calculate out $\gamma(\uomega)^{-1}A\gamma(\uomega A)$, the result is the matrix
$\begin{pmatrix} 
1 & 0\\
u_{\uomega,A} & v_{\uomega,A}
\end{pmatrix}$, where $u_{\uomega,A}$ and 
$v_{\uomega,A}$ are as in \eqref{uv}. The identity
in the following proposition was found in
\cite{Mil} and appears as Proposition 5.2(d) in
\cite{MT}. We repeat its brief proof here for
convenience.
\begin{prop}\label{uv_ident}
The functions defined in \eqref{uv} satisfy
\[
u_{\uomega,AB}=
u_{\uomega,A}+v_{\uomega,A}u_{\uomega A,B} 
\text{ and }
v_{\uomega,AB}=v_{\uomega,A}v_{\uomega A,B},
\]
for all $A,B\in\gl2$ and $\uomega\in \cO$.
\end{prop}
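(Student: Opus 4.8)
The plan is to exploit the matrix realization just recalled: set $P(\uomega,A)=\gamma(\uomega)^{-1}A\gamma(\uomega A)$, so that, by the computation preceding the statement, $P(\uomega,A)=\begin{pmatrix}1 & 0\\u_{\uomega,A} & v_{\uomega,A}\end{pmatrix}\in H_{(1,0)}$ for every $A\in\gl2$ and $\uomega\in\cO$. Both identities will then fall out of a single multiplicativity property of $P$ in its second argument, so that no direct manipulation of the rational functions \eqref{uv} is needed.

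First I would compute the product $P(\uomega,A)P(\uomega A,B)$ directly in $H_{(1,0)}$. Since each factor is lower triangular with a $1$ in the upper-left corner, the product is again of this form, and multiplying out gives
\[
\begin{pmatrix}1 & 0\\u_{\uomega,A} & v_{\uomega,A}\end{pmatrix}\begin{pmatrix}1 & 0\\u_{\uomega A,B} & v_{\uomega A,B}\end{pmatrix}=\begin{pmatrix}1 & 0\\u_{\uomega,A}+v_{\uomega,A}u_{\uomega A,B} & v_{\uomega,A}v_{\uomega A,B}\end{pmatrix}.
\]
The lower-left and lower-right entries on the right are exactly the two expressions that the proposition asserts equal $u_{\uomega,AB}$ and $v_{\uomega,AB}$.

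Next I would establish the telescoping identity $P(\uomega,AB)=P(\uomega,A)P(\uomega A,B)$. Expanding the right-hand side,
\[
P(\uomega,A)P(\uomega A,B)=\gamma(\uomega)^{-1}A\gamma(\uomega A)\gamma(\uomega A)^{-1}B\gamma(\uomega AB)=\gamma(\uomega)^{-1}(AB)\gamma(\uomega AB)=P(\uomega,AB),
\]
where the adjacent factors $\gamma(\uomega A)\gamma(\uomega A)^{-1}$ cancel. Here $\uomega A$ and $\uomega AB$ lie in $\cO$ because $A,B$ are invertible and $\uomega\neq(0,0)$, so every $\gamma$ that appears is defined. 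Comparing the lower-left and lower-right entries of $P(\uomega,AB)=\begin{pmatrix}1&0\\u_{\uomega,AB}&v_{\uomega,AB}\end{pmatrix}$ with the product computed above yields both claimed identities simultaneously.

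I do not expect a genuine obstacle: the two identities are a disguised assertion that $A\mapsto P(\uomega,A)$ is a twisted cocycle valued in $H_{(1,0)}$, and once the matrix picture is in place the proof reduces to the two-line cancellation above. The only point requiring care is bookkeeping the orbit argument of each $\gamma$, namely verifying that the base point of the second factor is $\uomega A$ rather than $\uomega$ so that the inner $\gamma(\uomega A)$ terms telescope. A brute-force substitution into \eqref{uv} would also prove the result, but it would be far more laborious and would obscure the underlying group-theoretic reason for the identities.
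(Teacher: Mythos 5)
Your proof is correct and is essentially identical to the paper's: both insert $\gamma(\uomega A)\gamma(\uomega A)^{-1}$ to factor $\gamma(\uomega)^{-1}AB\gamma(\uomega AB)$ as $\big(\gamma(\uomega)^{-1}A\gamma(\uomega A)\big)\big(\gamma(\uomega A)^{-1}B\gamma(\uomega AB)\big)$ and then read off the identities by multiplying the two matrices in $H_{(1,0)}$. Your write-up merely makes the cocycle viewpoint and the domain bookkeeping ($\uomega A,\uomega AB\in\cO$) more explicit.
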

\begin{proof}
Let $A,B\in\gl2$ and $\uomega\in\cO$. Then
\[
\gamma(\uomega)^{-1}AB\gamma(\uomega AB)=
\big(\gamma(\uomega)^{-1}A\gamma(\uomega A)\big)
\big(\gamma(\uomega A)^{-1}B\gamma(\uomega AB)\big).
\]
That is
$\begin{pmatrix} 
1 & 0\\
u_{\uomega,AB} & v_{\uomega,AB}
\end{pmatrix}=
\begin{pmatrix} 
1 & 0\\
u_{\uomega,A} & v_{\uomega,A}
\end{pmatrix}
\begin{pmatrix} 
1 & 0\\
u_{\uomega A,B} & v_{\uomega A,B}
\end{pmatrix}$
and the two identities in the proposition follow by multiplying 
out the matrices on the right.
\end{proof}
We make use of the parametrization of $K_0$ by $\gamma$.
For $F\in L^2\big(K_0,L^2(\R^*)\big)$, 
$[\ux,A]\in G_2$, and a.e. $\uomega\in\cO$,
$\sigma[\ux,A]F(\gamma(\uomega))$ is the
element of $L^2(\R^*)$ given by
\begin{equation}\label{sigma_pointwise_formula_1}
\big(\sigma[\ux,A]F(\gamma(\uomega))\big)(\nu) =  
\textstyle\frac{|\det(A)|^{1/2}\|\uomega\|}{\|\uomega A\|}
e^{2\pi i(\uomega\ux+t^{-1}u_{\uomega,A})}
\big(F(\gamma(\uomega A))\big)
(v_{\uomega,A}^{-1}\nu),
\end{equation}
for a.e. $\nu\in\R^*$. It is shown in \cite{MT} that
$\sigma$ is an irreducible representation of $G_2$.
Moreover, $\sigma$ is square--integrable. In fact,
the following is Theorem 5.9 in \cite{MT} with
$L^2\big(K,L^2(\R^*)\big)$ replaced by
$L^2\big(K_0,L^2(\R^*)\big)$.
.
\begin{prop}\label{V_E_isometry}
Let $g\in L^2(\R^*)$ satisfy $\int_{\R^*}\big|
|\nu|^{1/2}g(\nu)\big|^2d\mu_{\R^*}(\nu)=1$ and let
$\eta\in L^2(\widehat{\R^2})$ satisfy
$\|\eta\|_{_{L^2(\widehat{\R^2})}}=1$.
Let $E\in  L^2\big(K_0,L^2(\R^*)\big)$ be defined
as 
\[
\big(E(L)\big)(\nu)=
\frac{\overline{\eta}((1,0)L^{-1})}{|\det(L)|}\,g(\nu),
\text{ for a.e. } L\in K_0 \text{ and }
\nu\in\R^*.
\]
 Define $V_EF[\ux,A]=
\langle F,
\sigma[\ux,A]E\rangle_{_{L^2(K,L^2(\R^*))}}$, for
$[\ux,A]\in G_2$ and 
$F \in L^2\big(K_0,L^2(\R^*)\big)$. Then $V_E$ is
a linear isometry of $L^2\big(K_0,L^2(\R^*)\big)$
into $L^2(G_2)$. In particular, $\sigma$ is a square-integrable representation of $G_2$.
\end{prop}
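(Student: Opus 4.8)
The plan is to compute $\|V_EF\|_{L^2(G_2)}^2$ explicitly and verify that it equals $\|F\|^2_{L^2(K_0,L^2(\R^*))}$ for every $F$; as $V_E$ is plainly linear this is the isometry assertion, and specializing to $F=E\neq 0$ then exhibits $E$ as a nonzero admissible vector, making $\sigma$ square--integrable. First I would parametrize $K_0$ by $\gamma$ and record the Jacobian identity $d\mu_{K_0}(\gamma(\uomega))=\|\uomega\|^{-2}\,d\uomega$, which follows from $\det\gamma(\uomega)=\|\uomega\|^{-2}$. Writing $\phi=F\circ\gamma$ and inserting \eqref{sigma_pointwise_formula_1} together with $(1,0)\gamma(\uomega)^{-1}=\uomega$, the matrix coefficient becomes, for each fixed $A$, an inverse two--dimensional Fourier transform in the translation variable,
\[
V_EF[\ux,A]=\int_{\cO}G_A(\uomega)\,e^{-2\pi i\uomega\ux}\,d\uomega,
\]
where $G_A(\uomega)$ gathers the factor $|\det A|^{1/2}\tfrac{\|\uomega A\|}{\|\uomega\|}\eta(\uomega A)$ and the $\nu$--integral $I(\uomega,A)=\int_{\R^*}\phi(\uomega)(\nu)\,e^{-2\pi i\nu^{-1}u_{\uomega,A}}\,\overline{g(v_{\uomega,A}^{-1}\nu)}\,\tfrac{d\nu}{|\nu|}$. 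Applying Plancherel in $\ux$ and then the Haar formula \eqref{Haar_G_2} (the factors $|\det A|$ cancel) reduces the claim to showing
\[
\int_{\cO}\int_{\gl2}\frac{\|\uomega A\|^2}{\|\uomega\|^2}\,|\eta(\uomega A)|^2\,|I(\uomega,A)|^2\,dA\,d\uomega=\int_{\cO}\int_{\R^*}|\phi(\uomega)(\nu)|^2\,\frac{d\nu}{|\nu|}\,\frac{d\uomega}{\|\uomega\|^2}.
\]

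The conceptual heart is to decouple $\uomega$ from the pair $(u_{\uomega,A},v_{\uomega,A})$. For each fixed $\uomega$ I would substitute $A\mapsto\gamma(\uomega)A$ in the inner integral; left invariance of $dA$ makes this free, $\uomega\gamma(\uomega)A=(1,0)A$ by \eqref{stabilize_1_0}, and Proposition \ref{uv_ident} applied to the product $\gamma(\uomega)A$, combined with the values $u_{\uomega,\gamma(\uomega)}=0$ and $v_{\uomega,\gamma(\uomega)}=1$ (which hold because $\gamma((1,0))={\rm id}$), collapses the cocycle to $u_{\uomega,\gamma(\uomega)A}=u_{(1,0),A}$ and $v_{\uomega,\gamma(\uomega)A}=v_{(1,0),A}$. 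After this substitution the quantities $u$ and $v$ appearing in $I$ no longer depend on $\uomega$.

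Next I would split the $\gl2$ integral along the complementary subgroups $H_{(1,0)}$ and $K_0$, writing $A=CM$ with $C=\begin{pmatrix}1&0\\u&v\end{pmatrix}\in H_{(1,0)}$ and $M\in K_0$. A direct Jacobian computation gives the clean factorization $d\mu_{\gl2}(A)=\frac{du\,dv}{v^2}\,d\mu_{K_0}(M)$, under which $(1,0)A=(1,0)M$ runs over $\cO$ and the factor $\|(1,0)A\|^2$ exactly cancels the density of $d\mu_{K_0}$. Since the integrand is then a product of a function of $M$ (through $\eta$) and a function of $(u,v)$ (through $I$), the integral separates: the $K_0$ factor equals $\int_{\widehat{\R^2}}|\eta|^2=1$, using $\|\eta\|_2=1$, and the $H_{(1,0)}$ factor is exactly a one--dimensional admissibility integral of the form \eqref{G_1_identity}. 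I would evaluate the latter by Plancherel in $u$ after the substitution $\nu\mapsto\nu^{-1}$, followed by the substitution $v^{-1}\nu\mapsto w$ in the $v$--integral, obtaining $\|\phi(\uomega)\|^2_{L^2(\R^*)}\int_{\R}|g(\nu)|^2\,d\nu=\|\phi(\uomega)\|^2_{L^2(\R^*)}$, where the final equality is the normalization of $g$. Reassembling the two factors and integrating over $\uomega$ yields the right--hand side above, which is $\|F\|^2$.

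The main obstacle I anticipate is the subgroup decomposition: one must check that $(C,M)\mapsto CM$ is a bijection of $H_{(1,0)}\times K_0$ onto a co-null subset of $\gl2$ and that the Haar density factors with no extra modular correction, a point that needs care since $H_{(1,0)}\cong G_1$ is non--unimodular. A secondary issue is rigor in the Fourier and Fubini steps: I would first prove the identity for $F$ with $\phi$ compactly supported in $\cO\times\R^*$, where every interchange of integration and every $L^2$--Fourier identification is an absolutely convergent computation, and then pass to general $F$ by density, noting that the final multiple integral has a nonnegative integrand so that Tonelli applies without further hypotheses.
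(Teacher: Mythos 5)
There is no in-paper proof to compare you against: the paper imports Proposition \ref{V_E_isometry} verbatim as Theorem 5.9 of the companion preprint \cite{MT} and gives no argument. Judged on its own merits, your proof is correct. The reduction via Plancherel in $\ux$ and the Haar formula \eqref{Haar_G_2} is sound (the $|\det A|$ coming from $|G_A|^2$ cancels exactly against the density $|\det A|^{-1}$), and the substitution $A\mapsto\gamma(\uomega)A$ together with Proposition \ref{uv_ident} and $\gamma\big((1,0)\big)={\rm id}$ does collapse the cocycle to $u_{(1,0),A},v_{(1,0),A}$. Your two flagged worries resolve favorably. First, $(C,M)\mapsto CM$ is a homeomorphism of $H_{(1,0)}\times K_0$ onto \emph{all} of $\gl2$, not just a co-null subset: given $A$, put $M=\gamma\big((1,0)A\big)^{-1}\in K_0$ and $C=A\gamma\big((1,0)A\big)$, which lies in $H_{(1,0)}$ by \eqref{stabilize_1_0}, with uniqueness from $H_{(1,0)}\cap K_0=\{{\rm id}\}$; moreover $u_{(1,0),CM}=u$ and $v_{(1,0),CM}=v$ because $\gamma\big((1,0)M\big)=M^{-1}$, which is what makes the integrand split as (function of $M$)$\times$(function of $(u,v)$). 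Second, the factorization $d\mu_{\gl2}(A)=\frac{du\,dv}{v^2}\,d\mu_{K_0}(M)$ holds with no modular correction in the order $A=CM$: the general decomposition formula carries the weight $\Delta_{\gl2}(M)/\Delta_{K_0}(M)=1$ since $\gl2$ is unimodular and $K_0$ is abelian, and a direct Jacobian computation confirms it ($da\,db\,dc\,dd=(s^2+t^2)\,du\,dv\,ds\,dt$ while $\det(CM)^2=v^2(s^2+t^2)^2$). Your caution was warranted, though: in the reversed order $A=MC$ the weight $\Delta_{\gl2}(C)/\Delta_{H_{(1,0)}}(C)=|v|$ appears, so the clean product form is specific to the order you chose. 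With these in place, the $K_0$ factor integrates to $\|\eta\|^2_{L^2(\widehat{\R^2})}=1$ (the density of $d\mu_{K_0}$ cancels against $\|(1,0)M\|^2$), the $(u,v)$ factor gives $\|F(\gamma(\uomega))\|^2_{L^2(\R^*)}\int_{\R}|g|^2=\|F(\gamma(\uomega))\|^2_{L^2(\R^*)}$ exactly as you compute, and reassembly yields $\|F\|^2$. Your plan of proving the identity first for $F$ with compactly supported $\phi$ and then passing to general $F$ by density is the right repair for the fact that $G_A$ need not be integrable in general, and Tonelli legitimizes every interchange since the integrands are nonnegative.

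One caveat on the last sentence of the Proposition. Under the paper's Definition 3.1, square-integrability includes irreducibility, and your computation does not deliver that: it shows $V_E$ is injective for the single vector $E$, whereas the paper's criterion for irreducibility requires $V_\eta$ injective for \emph{every} nonzero $\eta$. To conclude that $\sigma$ is square-integrable you must, as the paper itself does just before the statement, quote the irreducibility of $\sigma$ from \cite{MT}; your argument then supplies the nonzero admissible vector.
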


We can make $\sigma$ more concrete by 
using a unitary map
to move to the Hilbert space $L^2(\widehat{\R^3})$.
The formulas are simplified if we write 
$\widehat{\R^3}$ as 
$\widehat{\R^2}\times\widehat{\R}$. Define 
$U:L^2\big(K_0,L^2(\R^*))\big)\to 
L^2\big(\widehat{\R^2}\times\widehat{\R}\big)$ by,
for $F\in L^2\big(K_0,L^2(\R^*)\big)$ and a.e.
$(\uomega,\omega_3)\in 
\widehat{\R^2}\times\widehat{\R}$,
\begin{equation}\label{U_definition}
(UF)(\uomega,\omega_3)=\begin{cases}
\frac{\big(F(\gamma(\uomega))\big)(\omega_3^{-1})}{\|\uomega\|\cdot|\omega_3|^{1/2}} & \text{for }
\uomega\in\cO, \omega_3\neq 0\\
0 & \text{otherwise.}
\end{cases} 
\end{equation}
\begin{prop}\label{U_unitary}
The map $U:L^2\big(K_0,L^2(\R^*) \big)\to 
L^2\big(\widehat{\R^2}\times\widehat{\R}\big)$ is
unitary with inverse $U^{-1}$ given by, for 
$\xi\in L^2\big(\widehat{\R^2}\times\widehat{\R}\big)$
and a.e $L\in K_0$,
\[
\left(U^{-1}\xi(L)\right)(\nu)= 
\frac{\|(1,0)L^{-1}\|\,\xi\big((1,0)L^{-1},\nu^{-1}\big)}{|\nu|^{1/2}} \text{ for a.e. } \nu\in\R^*.
\]
\end{prop}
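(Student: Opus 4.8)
The plan is to establish unitarity in two stages: first show that $U$ is an isometry by reducing both Hilbert-space norms to one common integral, and then verify that the displayed formula genuinely inverts $U$, so that $U$ is a bijective isometry, hence unitary.

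The computational heart of the argument is a single change-of-variables identity for the parametrizing map $\gamma\colon\cO\to K_0$. First I would record that $\gamma$ is a diffeomorphism of $\cO$ onto $K_0$ whose inverse is $L\mapsto(1,0)L^{-1}$, since $(1,0)\gamma(\uomega)^{-1}=\uomega$. Writing $\gamma(\uomega)=\begin{pmatrix} s & -t\\ t & s\end{pmatrix}$ with $s=\omega_1/\|\uomega\|^2$ and $t=\omega_2/\|\uomega\|^2$, I would compute the Jacobian of $(\omega_1,\omega_2)\mapsto(s,t)$; under the identifications $\uomega\leftrightarrow\omega_1+i\omega_2$ and $K_0\leftrightarrow\C^*$ this map is the anti-conformal inversion $z\mapsto 1/\bar z$, whose Jacobian has absolute value $\|\uomega\|^{-4}$, so $ds\,dt=\|\uomega\|^{-4}\,d\uomega$. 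Combining this with $\det\gamma(\uomega)=s^2+t^2=\|\uomega\|^{-2}$ and the stated Haar integral $d\mu_{K_0}(L)=\frac{ds\,dt}{s^2+t^2}$ yields the clean identity
\[
d\mu_{K_0}(\gamma(\uomega))=\frac{d\uomega}{\|\uomega\|^2}.
\]
This is the step I expect to be the main obstacle, since it is precisely where the two ambient measures must be matched exactly; everything else is bookkeeping around it.

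With this identity available, I would compute $\|UF\|^2$ in $L^2(\widehat{\R^2}\times\widehat{\R})$. Because $UF$ vanishes off the co-null set $\cO\times\R^*$, the norm equals $\int_\cO\int_{\R^*}\|\uomega\|^{-2}|\omega_3|^{-1}\,|F(\gamma(\uomega))(\omega_3^{-1})|^2\,d\omega_3\,d\uomega$. Substituting $\nu=\omega_3^{-1}$ in the inner integral turns $|\omega_3|^{-1}\,d\omega_3$ into $|\nu|^{-1}\,d\nu$, so that inner integral is exactly $\|F(\gamma(\uomega))\|_{L^2(\R^*)}^2$ with respect to the Haar measure of $\R^*$. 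The outer integral is then $\int_\cO\|\uomega\|^{-2}\,\|F(\gamma(\uomega))\|_{L^2(\R^*)}^2\,d\uomega$, and applying the measure identity above with $L=\gamma(\uomega)$ converts it into $\int_{K_0}\|F(L)\|_{L^2(\R^*)}^2\,d\mu_{K_0}(L)=\|F\|^2$. Hence $U$ is a linear isometry.

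Finally, letting $V$ denote the map defined by the claimed formula for $U^{-1}$, I would verify the pointwise identities $UV={\rm id}$ and $VU={\rm id}$ by direct substitution. For $UV$, with $L=\gamma(\uomega)$ the relations $(1,0)L^{-1}=\uomega$ and $\nu^{-1}=\omega_3$ make the factors $\|\uomega\|$ and $|\omega_3|^{1/2}$ cancel and return $\xi(\uomega,\omega_3)$; the computation for $VU$ is the mirror image and returns $F(L)(\nu)$. The same change of variables used for $U$ shows $V$ is norm-preserving, so it genuinely maps into $L^2\big(K_0,L^2(\R^*)\big)$. Thus $U$ and $V$ are mutually inverse isometries, $U$ is a surjective isometry, i.e.\ unitary, and $V=U^{-1}$, which is the assertion.
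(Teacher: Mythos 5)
Your proposal is correct and takes essentially the same route as the paper: the unitarity rests on the same change-of-variables computation that matches $\frac{d\uomega}{\|\uomega\|^{2}}\,\frac{d\omega_{3}}{|\omega_{3}|}$ on $\cO\times\R^{*}$ with $d\mu_{K_0}\,d\mu_{\R^{*}}$, giving $\|UF\|=\|F\|$. You additionally supply two items the paper compresses or omits entirely --- the explicit Jacobian of $\gamma$ (via $z\mapsto 1/\bar z$) that justifies $d\mu_{K_0}(\gamma(\uomega))=\frac{d\uomega}{\|\uomega\|^{2}}$, and the direct verification that the displayed map satisfies $UV=VU=\mathrm{id}$, which the paper dispenses with as ``similar calculations.''
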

\begin{proof}
For $F \in L^2\big(K_0,L^2(\R^*)\big)$, 
\[
\begin{split}
\| UF \|^{2}_{L^{2}(\widehat{\R^{2}} \times \widehat{\R})}
&= \int_{\widehat{\R^{2}}} \int_{\widehat{\R}} 
|UF(\uomega,\omega_3)|^{2} d\omega_{3} \, d\uomega  \\
&=\int_{\widehat{\R^{2}}} \int_{\widehat{\R}}  \left| \frac{\big( F(\gamma( \uomega))\big)( \omega^{-1}_{3})}{\| \uomega \| \cdot | \omega_{3}|^{1/2}} \right|^{2} d\omega_{3} \, d\uomega   \\
&=  \int_{\widehat{\R^{2}}} \int_{\widehat{\R}} \left| 
\big( F(\gamma( \uomega))\big)( \omega^{-1}_{3}) \right|^{2} \frac{d\omega_{3}}{| \omega_{3} |} \, \frac{d\uomega }{ \| \uomega \|^{2} }  \\
&=  \int_{K_0} \int_{{\R^{*}}} \left| \big( F(L) \big)
( \omega^{-1}_{3}) \right|^{2} d\mu_{\R^{*}}(\omega_{3}) \, d\mu_{K_0}(L) \\
&= \|F \|_{L^{2}(K_0, L^{2}(\R^{*}))}
\end{split}
\]
This shows that $ U $ is an isometry. Verifying
that $U$ is onto and the formula for $U^{-1}$
can be done with similar calculations.
\end{proof}
For $[\ux,A]\in G_2$, let $\sigma_2[\ux,A]=  
U\sigma[\ux,A]U^{-1}$. Then $\sigma_2$ is an 
irreducible representation of $G_2$ on 
$L^2\big(\widehat{\R^2}\times\widehat{\R}\big)$. To
find a formula for $\sigma_2$, let $\xi\in 
L^2\big(\widehat{\R^2}\times\widehat{\R}\big)$ and
set $F=U^{-1}\xi\in L^2\big(K_0,L^2(\R^*)\big)$. 
Use formula \eqref{sigma_pointwise_formula_1} for
$\sigma$.
For
$[\ux,A]\in G_2$ and a.e. $(\uomega,\omega_3)\in 
\widehat{\R^2}\times\widehat{\R}$, 
\begin{equation*}
\begin{split}
\big(\sigma_2[\ux,A]\xi\big)&(\uomega,\omega_3)  =
\big(U\sigma[\ux,A]F\big)(\uomega,\omega_3)=
\frac{\big(\sigma[\ux,A]F(\gamma(\uomega))\big)(\omega_3^{-1})}{\|\uomega\|\cdot|\omega_3|^{1/2}}\\
& =\frac{|\det(A)|^{1/2}}
{\|\uomega A\|\cdot|\omega_3|^{1/2}}
e^{2\pi i(\uomega\ux+\omega_3u_{\uomega,A})}
\big(U^{-1}\xi[\u0,\gamma(\uomega A)]\big)
(v_{\uomega,A}^{-1}\omega_3^{-1})\\
& = |v_{\uomega,A}|^{1/2}|\det(A)|^{1/2}
e^{2\pi i(\uomega\ux+\omega_3u_{\uomega,A})}
\xi(\uomega A,\omega_3v_{\uomega,A}),
\end{split}
\end{equation*}
where we have used $(1,0)\gamma(\uomega A)^{-1}=
\uomega A$.
Finally by \eqref{uv},
$v_{\uomega,A}=\frac{\det(A)\|\uomega\|^2}{\|\uomega A\|^2}$. So
\begin{equation}\label{sigma_1_1}
\big(\sigma_2[\ux,A]\xi\big)(\uomega,\omega_3)=
\textstyle\frac{|\det(A)|\cdot\|\uomega\|}{\|\uomega A\|}
e^{2\pi i(\uomega\ux+\omega_3 u_{\uomega,A})}
\xi\big(\uomega A,\omega_3v_{\uomega,A}\big).
\end{equation}
We also need $UE$, where 
$E\in L^2\big(K_0,L^2(\R^*)\big)$ is the kind of 
function used in Proposition \ref{V_E_isometry}.  
\begin{lemma}\label{E_to_psi}
Let $\zeta\in L^2(\widehat{\R^2})$ satisfy the
condition
$\int_{\widehat{\R^2}}|\zeta(\uomega)|^2
\frac{d\uomega}{\|\uomega\|^2}=1$ and
let
$\phi\in L^2(\widehat{\R})$ satisfy
$\int_{\widehat{\R}}|\phi(\omega_3)|^2
\frac{d\omega_3}{|\omega_3|}=1$.

\noindent
{\rm (a)} If $\eta(\uomega)=
\overline{\zeta}(\uomega)/\|\uomega\|$, for a.e.
$\uomega\in\widehat{\R^2}$, then 
$\eta\in L^2(\widehat{\R^2})$ with
$\|\eta\|_{_{L^2(\widehat{\R^2})}}=1$.  

\noindent
{\rm (b)} If
$g(\nu)=\frac{1}{|\nu|^{1/2}}\phi(\nu^{-1})$,
for a.e. $\nu\in\R^*$. Then
$g\in L^2(\R^*)$ and $\int_{\R^*}|\nu|\cdot|
g(\nu)|^2d\mu_{\R^*}(\nu)=1$. 

\noindent
{\rm (c)} If $\big(E(L)\big)(\nu)=
\frac{\overline{\eta}((1,0)L^{-1})}{|\det(L)|}\,g(\nu)$,
for a.e. $L\in K_0$, $\nu\in\R^*$, then 
$E\in L^2\big(K_0,L^2(\R^*)\big)$ and
\[
(UE)(\uomega,\omega_3)=\zeta(\uomega)\phi(\omega_3),
\text{ for a.e. }(\uomega,\omega_3)\in 
\widehat{\R^2}\times\widehat{\R^*}.
\]
\end{lemma}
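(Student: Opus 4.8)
The plan is to treat the three parts in turn; each is a direct computation, and the only point requiring genuine care is the bookkeeping of weights in part (c). Parts (a) and (b) are essentially the two ``halves'' of the normalization that will combine in (c).

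For (a), I would simply compute
\[
\|\eta\|_{L^2(\widehat{\R^2})}^2=\int_{\widehat{\R^2}}\frac{|\overline{\zeta}(\uomega)|^2}{\|\uomega\|^2}\,d\uomega=\int_{\widehat{\R^2}}\frac{|\zeta(\uomega)|^2}{\|\uomega\|^2}\,d\uomega,
\]
which is exactly the normalization assumed of $\zeta$. Hence membership in $L^2(\widehat{\R^2})$ and the unit norm both drop out at once, together with the pointwise relation $\overline{\eta}(\uomega)=\zeta(\uomega)/\|\uomega\|$ that I will need later.

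For (b), I would use that the Haar measure $d\mu_{\R^*}(\nu)=d\nu/|\nu|$ on the multiplicative (abelian, hence unimodular) group $\R^*$ is invariant under the inversion $\nu\mapsto\nu^{-1}$. With $g(\nu)=|\nu|^{-1/2}\phi(\nu^{-1})$, substituting gives $\int_{\R^*}|\nu|\,|g(\nu)|^2\,d\mu_{\R^*}(\nu)=\int_{\R^*}|\phi(\nu^{-1})|^2\,d\mu_{\R^*}(\nu)$, and inversion--invariance turns this into $\int_{\R^*}|\phi(\nu)|^2\,d\mu_{\R^*}(\nu)=\int_{\widehat{\R}}|\phi(\omega_3)|^2\,d\omega_3/|\omega_3|=1$. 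A single change of variable $\omega_3=\nu^{-1}$ also shows $\int_{\R^*}|g(\nu)|^2\,d\mu_{\R^*}(\nu)=\|\phi\|_{L^2(\widehat{\R})}^2<\infty$, so $g\in L^2(\R^*)$.

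For (c), rather than computing $UE$ from scratch I would recognize $E$ as $U^{-1}(\zeta\otimes\phi)$, where $\zeta\otimes\phi$ denotes the product function $(\uomega,\omega_3)\mapsto\zeta(\uomega)\phi(\omega_3)$. Since $\zeta\in L^2(\widehat{\R^2})$ and $\phi\in L^2(\widehat{\R})$, this product lies in $L^2(\widehat{\R^2}\times\widehat{\R})$, so feeding it into the explicit formula for $U^{-1}$ from Proposition \ref{U_unitary} yields a genuine element of $L^2(K_0,L^2(\R^*))$; unitarity of $U$ then gives both $E\in L^2(K_0,L^2(\R^*))$ and $UE=\zeta\otimes\phi$, provided I check that $U^{-1}(\zeta\otimes\phi)$ coincides with the $E$ in the statement. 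Carrying out $U^{-1}(\zeta\otimes\phi)(L)(\nu)=\|(1,0)L^{-1}\|\,\zeta((1,0)L^{-1})\,g(\nu)$ and inserting $\overline{\eta}(\uomega)=\zeta(\uomega)/\|\uomega\|$ from part (a), the desired equality with $E(L)(\nu)=\overline{\eta}((1,0)L^{-1})\,g(\nu)/|\det(L)|$ reduces to the single geometric identity $\|(1,0)L^{-1}\|^2\,|\det(L)|=1$ for $L\in K_0$.

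That last identity is where a weight or sign slip would hide, so it is the step I would verify explicitly. Writing $L=\begin{pmatrix} s & -t\\ t & s\end{pmatrix}$ with $s^2+t^2>0$, one has $\det(L)=s^2+t^2$ and $(1,0)L^{-1}=(s,t)/(s^2+t^2)$, whence $\|(1,0)L^{-1}\|^2=1/(s^2+t^2)$ and the product is indeed $1$; equivalently, this is the statement $|\det\gamma(\uomega)|=\|\uomega\|^{-2}$ compatible with $(1,0)\gamma(\uomega)^{-1}=\uomega$. With this in hand the normalizations from (a) and (b) cancel exactly, and the lemma follows. I expect no conceptual obstacle, only the need to keep the powers of $\|\uomega\|$, $|\omega_3|^{1/2}$, $|\det(L)|$, and the inversion $\nu\mapsto\nu^{-1}$ straight.
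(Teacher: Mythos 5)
Your proposal is correct and takes essentially the same route as the paper: the paper computes $UE$ pointwise from the formula for $U$, using $(1,0)\gamma(\uomega)^{-1}=\uomega$ and $|\det(\gamma(\uomega))|=\|\uomega\|^{-2}$, which is exactly your identity $\|(1,0)L^{-1}\|^2\,|\det(L)|=1$ read at $L=\gamma(\uomega)$. The only difference is the direction of the computation --- you apply $U^{-1}$ to $\zeta\otimes\phi$ and invoke unitarity, which has the minor advantage of yielding the membership $E\in L^2\big(K_0,L^2(\R^*)\big)$ automatically rather than leaving it implicit.
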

\begin{proof}
(a) and (b) are established by simple calculations. For
(c), recall that $(1,0)\gamma(\uomega)^{-1})=\uomega$ 
and $\det\big(\gamma(\uomega)
\big)=\|\uomega\|^{-2}$. So
\[
(UE)(\uomega,\omega_3)=
\frac{\overline{\eta}((1,0)\gamma(\uomega)^{-1})}{
\|\uomega\|\cdot|\omega_3|^{1/2}|\det(\gamma(\uomega))|}\,g(\omega_3^{-1})=
\|\uomega\|\overline{\eta}(\uomega)|\omega_3|^{-1/2}
g(\omega_3^{-1})=\zeta(\uomega)\phi(\omega_3),
\]
for a.e. $(\uomega,\omega_3)\in 
\widehat{\R^2}\times\widehat{\R}$.
\end{proof}
The results in Lemma \ref{E_to_psi} enable the following restatement of Proposition \ref{V_E_isometry}.

\begin{prop}\label{sigma_2_prop}
For each $[\ux,A]\in G_2$, define $\sigma_2[\ux,A]$ on
$L^2\big(\widehat{\R^2}\times\widehat{\R}\big)$ by,
for $\xi\in 
L^2\big(\widehat{\R^2}\times\widehat{\R}\big)$
\[
\big(\sigma_2[\ux,A]\xi\big)(\uomega,\omega_3)=
\textstyle\frac{|\det(A)|\cdot\|\uomega\|}{\|\uomega A\|}
e^{2\pi i(\uomega\ux+\omega_3 u_{\uomega,A})}
\xi\big(\uomega A,\omega_3v_{\uomega,A}\big),
\]
for a.e. $(\uomega,\omega_3)\in 
\widehat{\R^2}\times\widehat{\R}$. Then $\sigma_2$
is an irreducible representation of $G_2$ that is
equivalent to $\sigma$ as given in \eqref{sigma_pointwise_formula_1}. 
Let $\zeta\in L^2(\widehat{\R^2})$ satisfy 
$\int_{\widehat{\R^2}}|\zeta(\uomega)|^2
\frac{d\uomega}{\|\uomega\|^2}=1$ and
let
$\phi\in L^2(\widehat{\R})$ satisfy
$\int_{\widehat{\R}}|\phi(\omega_3)|^2
\frac{d\omega_3}{|\omega_3|}=1$. Define
$\psi(\uomega,\omega_3)=\zeta(\uomega)\phi(\omega_3)$,
for a.e. $(\uomega,\omega_3)\in 
\widehat{\R^2}\times\widehat{\R}$. Then $\psi\in 
L^2\big(\widehat{\R^2}\times\widehat{\R}\big)$.
For $\xi\in L^2\big(\widehat{\R^2}\times\widehat{\R}\big)$,
let $V_\psi\xi[\ux,A]=\langle\xi,\sigma_2[\ux,A]\psi
\rangle_{L^2(\widehat{\R^2}\times\widehat{\R})}$, for
all $[\ux,A]\in G_2$. Then $V_\psi\xi\in L^2(G_2)$
and $\|V_\psi\xi\|_{L^2(G_2)}=
\|\xi\|_{L^2(\widehat{\R^2}\times\widehat{\R})}$,
for each $\xi\in 
L^2\big(\widehat{\R^2}\times\widehat{\R}\big)$. In 
particular, $\sigma_2$ is square--integrable.
\end{prop}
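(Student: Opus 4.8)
The plan is to transport every assertion through the unitary map $U$ of Proposition \ref{U_unitary}, thereby reducing everything to the already-proved Proposition \ref{V_E_isometry} together with the dictionary provided by Lemma \ref{E_to_psi}. The first two claims require almost no work. By construction $\sigma_2[\ux,A]=U\sigma[\ux,A]U^{-1}$, so $\sigma_2$ is unitarily equivalent to $\sigma$ via the unitary $U$; since $\sigma$ is irreducible (as established in \cite{MT}) and irreducibility is preserved under unitary equivalence, $\sigma_2$ is irreducible. That the stated formula for $\sigma_2$ coincides with \eqref{sigma_1_1} is precisely the pointwise computation carried out immediately before the statement, using \eqref{sigma_pointwise_formula_1} and the inverse $U^{-1}$ from Proposition \ref{U_unitary}.

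Next I would feed the normalization hypotheses into Lemma \ref{E_to_psi}. Given $\zeta$ and $\phi$ satisfying the two weighted conditions, set $\eta(\uomega)=\overline{\zeta}(\uomega)/\|\uomega\|$ and $g(\nu)=|\nu|^{-1/2}\phi(\nu^{-1})$. By parts (a) and (b) of Lemma \ref{E_to_psi}, $\|\eta\|_{L^2(\widehat{\R^2})}=1$ and $\int_{\R^*}|\nu|\,|g(\nu)|^2\,d\mu_{\R^*}(\nu)=1$, which are exactly the hypotheses demanded by Proposition \ref{V_E_isometry}. Let $E$ be the associated vector of Lemma \ref{E_to_psi}(c). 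Then part (c) of that lemma gives $UE=\psi$, so $\psi=UE\in L^2\big(\widehat{\R^2}\times\widehat{\R}\big)$; this settles the membership claim, and $\psi\neq0$ since $\zeta$ and $\phi$ are nonzero.

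It remains to identify $V_\psi$ with $V_E$. For $\xi\in L^2\big(\widehat{\R^2}\times\widehat{\R}\big)$, set $F=U^{-1}\xi$, so $\xi=UF$, and note $E=U^{-1}\psi$. Using $\sigma_2[\ux,A]=U\sigma[\ux,A]U^{-1}$ I would compute
\[
\sigma_2[\ux,A]\psi=U\sigma[\ux,A]U^{-1}\psi=U\sigma[\ux,A]E,
\]
and then, since $U$ preserves inner products,
\[
V_\psi\xi[\ux,A]=\langle UF,U\sigma[\ux,A]E\rangle=\langle F,\sigma[\ux,A]E\rangle=V_EF[\ux,A].
\]
Thus $V_\psi=V_E\circ U^{-1}$. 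Proposition \ref{V_E_isometry} asserts $V_E$ is a linear isometry of $L^2\big(K_0,L^2(\R^*)\big)$ into $L^2(G_2)$, and $U^{-1}$ is unitary, so $V_\psi$ is a linear isometry of $L^2\big(\widehat{\R^2}\times\widehat{\R}\big)$ into $L^2(G_2)$. Hence $V_\psi\xi\in L^2(G_2)$ and $\|V_\psi\xi\|_{L^2(G_2)}=\|\xi\|_{L^2(\widehat{\R^2}\times\widehat{\R})}$ for every $\xi$. Taking $\xi=\psi$ shows $V_\psi\psi\in L^2(G_2)$ with $\psi\neq0$, so $\sigma_2$ is square--integrable.

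The substantive analytic content, namely the isometry of $V_E$, is already supplied by Proposition \ref{V_E_isometry}, so there is no genuinely hard step here; the argument is pure transport of structure. The only place demanding care is the bookkeeping that matches the vector $E$ built from $\eta$ and $g$ to the hypotheses of Proposition \ref{V_E_isometry} and confirms via Lemma \ref{E_to_psi}(c) that $UE$ equals the product $\zeta(\uomega)\phi(\omega_3)$. Once the identity $\sigma_2[\ux,A]\psi=U\sigma[\ux,A]E$ is in hand, the unitarity of $U$ does the rest.
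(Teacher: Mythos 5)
Your proposal is correct and is essentially the paper's own argument: the paper presents this proposition as a ``restatement'' of Proposition \ref{V_E_isometry} obtained by transporting everything through the unitary $U$, with Lemma \ref{E_to_psi} supplying exactly the dictionary $UE=\psi$ that you invoke. Your identification $V_\psi\xi=V_E(U^{-1}\xi)$, together with the unitarity of $U$ and the nonvanishing of $\psi$, fills in the same bookkeeping the paper leaves implicit, so there is nothing to correct.
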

Notice that $\psi$ as defined in Proposition \ref{sigma_2_prop} satisfies 
$\int_{\widehat{\R}}\int_{\widehat{\R^2}}
\frac{|\psi(\uomega,\omega_3)|^2}{\|\uomega\|^2|\omega_3|}
d\uomega\,d\omega_3=1$. This fact, together with
the fact that $V_\psi$ is an isometry, suggests a
form for $C_{\sigma_2}$, the Duflo--Moore operator
whose existence is part of Theorem \ref{Du_Mo}.
Our candidate is an operator $T$ whose domain is
\[
\cD_T
= 
\left\{
\xi\in L^2\big(\widehat{\R^2}\times\widehat{\R}\big):
\int_{\widehat{\R}}\int_{\widehat{\R^2}}
|\xi(\uomega,\omega_3)|^2\textstyle
\frac{d\uomega\,d\omega_3}{\|\uomega\|^2|\omega_3|}
<\infty\right\}.
\]
Then $\cD_T$ is a dense subspace of 
$L^2\big(\widehat{\R^2}\times\widehat{\R}\big)$.
We define a {\em multiplication operator} $T$ on
$L^2\big(\widehat{\R^2}\times\widehat{\R}\big)$
by, for $\xi\in 
L^2\big(\widehat{\R^2}\times\widehat{\R}\big)$,
\[
T\xi(\uomega,\omega_3)=\|\uomega\|^{-1} |\omega_3|^{-1/2}
\xi(\uomega,\omega_3), \text{ for a.e. }
(\uomega,\omega_3)\in \widehat{\R^2}\times\widehat{\R}.
\]
Then $T\xi$ is a Borel measurable function on
$\widehat{\R^2}\times\widehat{\R}$, for any
$f\in 
L^2\big(\widehat{\R^2}\times\widehat{\R}\big)$.
However
$T\xi\in 
L^2\big(\widehat{\R^2}\times\widehat{\R}\big)$
if and only if $\xi\in\cD_T$. Thus, $T$ is an
unbounded operator. For any $\xi\in\cD_T$,
$\langle T\xi,\xi\rangle_{_{L^2(\widehat{\R^2}\times\widehat{\R})}}\geq 0$. That is, $T$ is a 
{\em positive} operator.
\begin{lemma}\label{D_T_invariant}
For any $\xi\in\cD_T$, $\sigma_2[\ux,A]\xi\in\cD_T$,
for all $[\ux,A]\in G_2$.
\end{lemma}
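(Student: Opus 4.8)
The plan is to verify directly that the weighted integral defining $\cD_T$ stays finite under $\sigma_2[\ux,A]$. In fact I will establish the stronger identity
\[
\int_{\widehat{\R}}\int_{\widehat{\R^2}}
\big|(\sigma_2[\ux,A]\xi)(\uomega,\omega_3)\big|^2
\frac{d\uomega\,d\omega_3}{\|\uomega\|^2|\omega_3|}
= |\det(A)|
\int_{\widehat{\R}}\int_{\widehat{\R^2}}
|\xi(\uomega,\omega_3)|^2
\frac{d\uomega\,d\omega_3}{\|\uomega\|^2|\omega_3|},
\]
which gives the claim at once. Since the integrand is nonnegative, Tonelli's theorem lets me iterate and reorder the integrals freely, and I may discard the null sets $\{\uomega=(0,0)\}$ and $\{\omega_3=0\}$, working on $\uomega\in\cO$ and $\omega_3\in\R^*$.

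First I substitute the explicit formula \eqref{sigma_1_1}. The exponential factor has modulus one and disappears, leaving
$\big|(\sigma_2[\ux,A]\xi)(\uomega,\omega_3)\big|^2
=\frac{|\det(A)|^2\|\uomega\|^2}{\|\uomega A\|^2}\,
|\xi(\uomega A,\omega_3 v_{\uomega,A})|^2$.
The factor $\|\uomega\|^2$ then cancels against the weight $\|\uomega\|^{-2}$.

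Next, holding $\uomega$ fixed, I change variables $\omega_3\mapsto\omega_3 v_{\uomega,A}$ in the inner integral. The decisive observation is that $d\omega_3/|\omega_3|$ is the Haar measure of the multiplicative group $\R^*$ and is therefore invariant under scaling by the nonzero constant $v_{\uomega,A}=\det(A)\|\uomega\|^2/\|\uomega A\|^2$ (nonzero for $\uomega\in\cO$). Hence the dependence on $v_{\uomega,A}$ evaporates and the inner integral becomes $\int_{\widehat{\R}}|\xi(\uomega A,\omega_3)|^2\,\frac{d\omega_3}{|\omega_3|}$. Finally I apply the linear substitution $\uomega\mapsto\uomega A$ in the outer integral; its Jacobian has absolute value $|\det(A)|$, so $d\uomega=|\det(A)|^{-1}\,d(\uomega A)$ while $\|\uomega A\|$ plays the role of the new radial variable. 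Collecting the constants $|\det(A)|^2\cdot|\det(A)|^{-1}=|\det(A)|$ yields the displayed identity.

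The only step requiring genuine care — the main obstacle — is the inner substitution in $\omega_3$, because its scaling factor $v_{\uomega,A}$ depends on the outer variable $\uomega$ and so naively entangles the two integrations. The scale-invariance of $d\omega_3/|\omega_3|$ is exactly what removes this factor cleanly \emph{before} the $\uomega$-substitution is performed, leaving no residual $\omega_3$-Jacobian to interfere. Everything else is bookkeeping of multiplicative constants, and since $\xi\in\cD_T$ makes the right-hand side finite, we conclude $\sigma_2[\ux,A]\xi\in\cD_T$.
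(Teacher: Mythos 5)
Your proof is correct and follows essentially the same route as the paper: substitute the formula \eqref{sigma_1_1}, cancel the unimodular exponential and the $\|\uomega\|^2$ factor, use Tonelli together with the scale-invariance of the Haar measure $d\omega_3/|\omega_3|$ on $\R^*$ to absorb $v_{\uomega,A}$, and finish with the substitution $\uomega\mapsto\uomega A$ of Jacobian $|\det(A)|$, arriving at the same identity with the factor $|\det(A)|$. The only difference is cosmetic: the paper performs the $\uomega$-substitution before the $\omega_3$-rescaling, while you do them in the opposite order.
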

\begin{proof}
Fix $[\ux,A]\in G_2$. For any $\xi\in\cD_T$ and
a.e. $(\uomega,\omega_3)\in \widehat{\R^2}\times
\widehat{\R}$, by \eqref{sigma_1_1}
\[
\begin{split}
\textstyle
\int_{\widehat{\R}}\int_{\widehat{\R^2}}
|\sigma_2[\ux,A]\xi(\uomega,\omega_3)|^2 
\frac{d\uomega\,d\omega_3}{\|\uomega\|^2|\omega_3|}
& =\textstyle
|\det(A)|^2\int_{\widehat{\R}}\int_{\widehat{\R^2}}
\frac{\|\uomega\|^2}{\|\uomega A\|^2}|\xi(\uomega A,\omega_3v_{\uomega,A})|^2
\frac{d\uomega\,d\omega_3}{\|\uomega\|^2|\omega_3|}\\
& = |\det(A)|\int_{\widehat{\R}}\int_{\widehat{\R^2}}
\textstyle\frac{1}{\|\uomega\|^2}|\xi(\uomega,\omega_3v_{\uomega A^{-1},A})|^2
\frac{d\uomega\,d\omega_3}{|\omega_3|}\\
& = 
|\det(A)|\int_{\widehat{\R^2}}\int_{\widehat{\R}}
\textstyle\frac{1}{\|\uomega\|^2}|\xi(\uomega,\omega_3v_{\uomega A^{-1},A})|^2
\frac{d\omega_3}{|\omega_3|}d\uomega\\
& = |\det(A)|\int_{\widehat{\R}}\int_{\widehat{\R^2}}|\xi(\uomega,\omega_3)|^2
\frac{d\uomega\,d\omega_3}{\|\uomega\|^2|\omega_3|}
<\infty.
\end{split}
\]
Thus, $\sigma_2[\ux,A]\xi\in\cD_T$.
\end{proof}

So $\cD_T$ is a (non-closed) $\sigma_2$-invariant
subspace of 
$L^2\big(\widehat{\R^2}\times\widehat{\R}\big)$.
For any $[\ux,A]\in G_2$, it is now easy to see that 
$\sigma_2[\ux,A]T\sigma_2[\ux,A]^*$ is a 
self-adjoint positive operator with domain $\cD_T$.
It is an important fact that 
$\sigma_2[\ux,A]T\sigma_2[\ux,A]^*$ is just a
special multiple of the operator $T$. We recall that
the modular function of $G_2$ is given by
$\Delta_{G_2}[\ux,A]=|\det(A)|^{-1}$, for $[\ux,A]
\in G_2$.
\begin{prop}\label{semi_invariance}
For any $[\ux,A]\in G_2$, $\sigma_2[\ux,A]T
\sigma_2[\ux,A]^*=\Delta_{G_2}[\ux,A]^{1/2}T$.
\end{prop}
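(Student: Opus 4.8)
The plan is to exploit that $\sigma_2$ is a unitary representation, so that $\sigma_2[\ux,A]^*=\sigma_2[\ux,A]^{-1}=\sigma_2[-A^{-1}\ux,A^{-1}]$, and then to compute the conjugated operator $\sigma_2[\ux,A]\,T\,\sigma_2[\ux,A]^*$ pointwise by applying the three factors to a generic $\xi$ in order. Each factor is either the multiplication operator $T$ or an operator of the ``multiplier times argument-change'' type given by \eqref{sigma_1_1}, so the composition is again a multiplication operator and the entire task reduces to tracking how the nonnegative amplitude factors and the unimodular phase factors combine. Lemma \ref{D_T_invariant} already shows that $\cD_T$ is $\sigma_2$-invariant, so there are no domain subtleties: both sides are defined exactly on $\cD_T$, and it suffices to check the identity there.

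Carrying out the composition (apply $\sigma_2[\ux,A]^*=\sigma_2[-A^{-1}\ux,A^{-1}]$ first, then $T$, then $\sigma_2[\ux,A]$), the outer factor evaluates everything at the transformed point $(\uomega A,\omega_3 v_{\uomega,A})$, while the inner factor then sends the argument of $\xi$ to $\big((\uomega A)A^{-1},\,\omega_3 v_{\uomega,A}\,v_{\uomega A,A^{-1}}\big)=\big(\uomega,\,\omega_3 v_{\uomega,A} v_{\uomega A,A^{-1}}\big)$. The key inputs are the cocycle identities of Proposition \ref{uv_ident} specialized to $B=A^{-1}$. Since $u_{\uomega,\mathrm{id}}=0$ and $v_{\uomega,\mathrm{id}}=1$ (immediate from \eqref{uv}), one gets $v_{\uomega,A}\,v_{\uomega A,A^{-1}}=1$ and $v_{\uomega,A}\,u_{\uomega A,A^{-1}}=-u_{\uomega,A}$. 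The first identity returns the second argument of $\xi$ to $\omega_3$, confirming that the composite is multiplication by a scalar-valued function of $(\uomega,\omega_3)$.

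It then remains to evaluate that scalar. The phases cancel completely: the $\uomega\ux$ term from $\sigma_2[\ux,A]$ and the $(\uomega A)(-A^{-1}\ux)=-\uomega\ux$ term from $\sigma_2[\ux,A]^*$ are opposite, and the second cocycle identity gives $\omega_3 v_{\uomega,A}\,u_{\uomega A,A^{-1}}=-\omega_3 u_{\uomega,A}$, which cancels the remaining $\omega_3 u_{\uomega,A}$ term. For the amplitude I would multiply the three factors $\frac{|\det(A)|\,\|\uomega\|}{\|\uomega A\|}$ (from $\sigma_2[\ux,A]$), $\|\uomega A\|^{-1}\,|\omega_3 v_{\uomega,A}|^{-1/2}$ (from $T$ evaluated at the transformed point), and $\frac{|\det(A)|^{-1}\,\|\uomega A\|}{\|\uomega\|}$ (from $\sigma_2[\ux,A]^*$ at the transformed point, using $(\uomega A)A^{-1}=\uomega$), and substitute $|v_{\uomega,A}|^{1/2}=|\det(A)|^{1/2}\|\uomega\|/\|\uomega A\|$ from \eqref{uv}. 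After cancellation the amplitude collapses to $|\det(A)|^{-1/2}\,\|\uomega\|^{-1}|\omega_3|^{-1/2}$, which is exactly $|\det(A)|^{-1/2}$ times the multiplier of $T$. Since $\Delta_{G_2}[\ux,A]^{1/2}=|\det(A)|^{-1/2}$, this yields $\sigma_2[\ux,A]\,T\,\sigma_2[\ux,A]^*=\Delta_{G_2}[\ux,A]^{1/2}\,T$.

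The main obstacle is purely bookkeeping: keeping straight whether each occurrence of $u$, $v$, and the norm factors is evaluated at $(\uomega,A)$ or at $(\uomega A,A^{-1})$, and invoking the cocycle relations of Proposition \ref{uv_ident} with the correct arguments. The conceptual content is slight once one recognizes that conjugating a multiplication operator by the unitary $\sigma_2[\ux,A]$ simply re-evaluates the multiplier along the orbit map, and that the resulting Jacobian-type factors are governed entirely by $v_{\uomega,A}=\det(A)\|\uomega\|^2/\|\uomega A\|^2$ together with the cocycle identities.
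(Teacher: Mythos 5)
Your proposal is correct and follows essentially the same route as the paper's proof: write $\sigma_2[\ux,A]^*=\sigma_2[-A^{-1}\ux,A^{-1}]$, compute the composition pointwise via \eqref{sigma_1_1}, invoke the cocycle identities $v_{\uomega,A}v_{\uomega A,A^{-1}}=1$ and $v_{\uomega,A}u_{\uomega A,A^{-1}}=-u_{\uomega,A}$ from Proposition \ref{uv_ident}, and collapse the amplitude using $|v_{\uomega,A}|^{1/2}=|\det(A)|^{1/2}\|\uomega\|/\|\uomega A\|$. Your explicit appeal to Lemma \ref{D_T_invariant} to settle the domain question matches the remark the paper makes just before the proposition, so nothing is missing.
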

\begin{proof} 
Fix $[\ux,A]\in G_2$.
For any $\xi\in \cD_T$, let 
$\xi'=\sigma_2[\ux,A]^*\xi=
\sigma_2[-A^{-1}\ux,A^{-1}]\xi$. 
Using \eqref{sigma_1_1},
\begin{equation}\label{semi_1}
\begin{split}
\sigma_2[\ux,A]T
\xi'(\uomega,\omega_3) & =
\textstyle\frac{|\det(A)|\cdot\|\uomega\|}{\|\uomega A\|}
e^{2\pi i(\uomega\ux+\omega_3 u_{\uomega,A})}
\big(T\xi'\big)
\big(\uomega A,\omega_3v_{\uomega,A})\\
& =
\textstyle\frac{|\det(A)|\cdot\|\uomega\|}{\|\uomega A\|^2|\omega_3v_{\uomega,A}|^{1/2}}
e^{2\pi i(\uomega\ux+\omega_3 u_{\uomega,A})}
\xi'
\big(\uomega A,\omega_3v_{\uomega,A}),
\end{split}
\end{equation}
for a.e. $(\uomega,\omega_3)\in 
L^2\big(\widehat{\R^2}\times\widehat{\R}\big)$.
On the other hand, 
\begin{equation}\label{semi_2}
\begin{split}
\xi'
\big(\uomega A,\omega_3v_{\uomega,A}) & =
\sigma_2[-A^{-1}\ux,A^{-1}]\xi
\big(\uomega A,\omega_3v_{\uomega,A})\\
& = \textstyle\frac{|\det(A^{-1}|\cdot \|\uomega A\|}{\|\uomega AA^{-1}\|}
e^{2\pi i(\uomega A(-A^{-1}\ux)+\omega_3v_{\uomega,A}u_{\uomega A,A^{-1}})}
\xi(\uomega,\omega_3)\\
& = \textstyle\frac{|\det(A|^{-1} \|\uomega A\|}{\|\uomega\|}
e^{-2\pi i(\uomega\ux+\omega_3u_{\uomega,A})}
\xi(\uomega,\omega_3)
\end{split}
\end{equation}
using $v_{\uomega,A}v_{\uomega A,A^{-1}}=1$
and $v_{\uomega,A}u_{\uomega A,A^{-1}}=
-u_{\uomega,A}$, which follow from Proposition
\ref{uv_ident} and the observation
that $u_{\uomega,{\rm id}}=0$ and
$v_{\uomega,{\rm id}}=1$, for any
$\uomega\in\cO$. Inserting \eqref{semi_2} in
\eqref{semi_1}, using $|v_{\uomega,A}|^{1/2}= 
\frac{|\det(A)|^{1/2}\|\uomega\|}{\|\uomega A\|}$,
and reducing gives
\[
\begin{split}
\sigma_2[\ux,A]T\sigma_2[\ux,A]^*
\xi(\uomega,\omega_3) & = 
|\det(A)|^{-1/2}\|\uomega\|^{-1}|\omega_3|^{-1/2}
\xi(\uomega,\omega_3)\\
& =
\Delta_G[\ux,A]^{1/2}T\xi(\uomega,\omega_3),
\end{split}
\]
for a.e. $(\uomega,\omega_3)\in 
\widehat{\R^2}\times\widehat{\R}$.
This shows that 
$\sigma_2[\ux,A]T\sigma_2[\ux,A]^*$ and 
$\Delta_G[\ux,A]T$ agree as self-adjoint positive
operators on $L^2\big(\widehat{\R^2}\times\widehat{\R}\big)$.
\end{proof}

In the terminology of \cite{DM}, Proposition 
\ref{semi_invariance} says $T$ is 
semi-invariant, with weight $\Delta_G^{1/2}$,
with respect to the irreducible representation
$\sigma_2$. However, Proposition \ref{sigma_2_prop}
says that $\sigma_2$ is a square-integrable
representation. Let $C_{\sigma_2}$ denote the
Duflo-Moore operator associated with $\sigma_2$
as described in Theorem \ref{Du_Mo}. In
particular, $C_{\sigma_2}$ is 
semi-invariant, with weight $\Delta_G^{1/2}$,
with respect to the irreducible representation
$\sigma_2$ as well. By the uniqueness part of
Theorem \ref{Du_Mo}, there
is a positive constant $r$ such that 
$C_{\sigma_2}=rT$.
\begin{theorem}\label{Duflo_Moore_operator}
The Duflo-Moore operator 
$C_{\sigma_2}$ associated with
$\sigma_2$ is given by, for any $\xi\in 
L^2\big(\widehat{\R^2}\times\widehat{\R}\big)$,
$C_{\sigma_2}\xi(\uomega,\omega_3)=
\|\uomega\|^{-1}|\omega_3|^{-1/2}
\xi(\uomega,\omega_3)$, for a.e.
$(\uomega,\omega_3)\in 
\widehat{\R^2}\times\widehat{\R}$.
\end{theorem}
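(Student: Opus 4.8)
The plan is to invoke the uniqueness clause of the Duflo--Moore Theorem to reduce the statement to determining a single positive scalar, and then to fix that scalar using the orthogonality relation in part (d). Indeed, Propositions \ref{semi_invariance} and \ref{sigma_2_prop} have already supplied the substantive input: $T$ is a nonzero self-adjoint positive operator satisfying the semi-invariance identity $\sigma_2[\ux,A]T\sigma_2[\ux,A]^* = \Delta_{G_2}[\ux,A]^{1/2}T$, while $\sigma_2$ is square--integrable. The final assertion of Theorem \ref{Du_Mo} therefore applies verbatim and gives that the domain of $T$ equals $\cD_{\sigma_2}$ and that $C_{\sigma_2} = rT$ for some $r > 0$. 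It remains only to show $r = 1$.

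To pin down $r$, I would specialize part (d) of Theorem \ref{Du_Mo} to $\eta_1 = \eta_2 = \psi$ and $\xi_1 = \xi_2 = \xi$, where $\psi$ is the product vector from Proposition \ref{sigma_2_prop}. The relation then reads $\|V_\psi\xi\|_{L^2(G_2)}^2 = \|\xi\|_{L^2(\widehat{\R^2}\times\widehat{\R})}^2\,\|C_{\sigma_2}\psi\|^2$. Proposition \ref{sigma_2_prop} also tells us that $V_\psi$ is an isometry, so the left-hand side equals $\|\xi\|^2$. Taking any nonzero $\xi$ and cancelling $\|\xi\|^2$ forces $\|C_{\sigma_2}\psi\| = 1$.

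The remaining computation is to evaluate $\|T\psi\|$. Since $\psi(\uomega,\omega_3) = \zeta(\uomega)\phi(\omega_3)$, the definition of $T$ gives
\[
\|T\psi\|^2 = \int_{\widehat{\R}}\int_{\widehat{\R^2}}
\frac{|\zeta(\uomega)|^2\,|\phi(\omega_3)|^2}{\|\uomega\|^2\,|\omega_3|}\,d\uomega\,d\omega_3,
\]
which factors, by Fubini, into the product of the two normalized integrals $\int_{\widehat{\R^2}}|\zeta(\uomega)|^2\|\uomega\|^{-2}\,d\uomega = 1$ and $\int_{\widehat{\R}}|\phi(\omega_3)|^2|\omega_3|^{-1}\,d\omega_3 = 1$ imposed on $\zeta$ and $\phi$ in Proposition \ref{sigma_2_prop}. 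Hence $\|T\psi\| = 1$; in particular $\psi \in \cD_T = \cD_{\sigma_2}$, which legitimizes the use of part (d) above. Combining $C_{\sigma_2} = rT$ with the two norm computations yields $1 = \|C_{\sigma_2}\psi\| = r\,\|T\psi\| = r$, so $r = 1$ and $C_{\sigma_2} = T$, exactly the claimed formula.

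I do not anticipate any serious obstacle: the genuinely computational step, the semi-invariance identity, is already established in Proposition \ref{semi_invariance}, and the only new quantitative ingredient is the Fubini factorization of $\|T\psi\|^2$, which was engineered to match the normalizations built into $\psi$. The one point that calls for a little care is the logical order, namely confirming that $\psi$ lies in the common domain $\cD_{\sigma_2} = \cD_T$ before applying the orthogonality relation, but this is settled as soon as $\|T\psi\|$ is seen to be finite.
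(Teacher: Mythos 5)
Your proposal is correct and follows essentially the same route as the paper's proof: both reduce to $C_{\sigma_2}=rT$ via the uniqueness clause (using Propositions \ref{semi_invariance} and \ref{sigma_2_prop}), compute $\|T\psi\|=1$ for the product vector $\psi$ by the Fubini factorization, and use the isometry property of $V_\psi$ together with the orthogonality relations to force $r=1$. The only cosmetic difference is that you quote part (d) of Theorem \ref{Du_Mo} directly where the paper cites Theorem 2.25 of \cite{Fuh}, and your explicit check that $\psi\in\cD_T=\cD_{\sigma_2}$ is a point the paper leaves implicit.
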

\begin{proof}
Let $r>0$ be such that $C_{\sigma_2}=rT$.
Let $\zeta\in L^2\big(\widehat{\R^2}\big)$ 
satisfy the condition that
$\int_{\widehat{\R^2}}
|\zeta(\uomega)|^2\frac{d\uomega}{\|\uomega\|^2}=1$
and $\phi\in L^2\big(\widehat{\R}\big)$ 
satisfy $\int_{\widehat{\R}}|\phi(\omega_3)|^2
\frac{d\omega_3}{|\omega_3|}=1$. Define $\psi$ from
$\zeta$ and $\phi$ as in Proposition 
\ref{sigma_2_prop}. Then 
\[
\begin{split}
\|T\psi\|_{L^2(\widehat{\R^2}\times\widehat{\R})}^2 & =
\int_{\widehat{\R}}\int_{\widehat{\R^2}}
\big|\|\uomega\|^{-1}|\omega_3|^{-1/2}
\psi(\uomega,\omega_3)\big|^2d\uomega\,d\omega_3\\
& = \int_{\widehat{\R}}\left(\int_{\widehat{\R^2}}
|\zeta(\uomega)|^2\textstyle\frac{d\uomega}{\|\uomega\|^2}\right)|\phi(\omega_3)|^2
\textstyle\frac{d\omega_3}{|\omega_3|} = 1.
\end{split}
\]
Moreover, $V_\psi$ is an isometry of 
$L^2\big(\widehat{\R^2}\times\widehat{\R}\big)$
into $L^2(G_2)$. 
On the other hand, from Theorem 2.25 of \cite{Fuh}
we see that $V_\psi$ is an isometry of 
$L^2\big(\widehat{\R^2}\times\widehat{\R}\big)$
into $L^2(G_2)$ exactly when 
\[
1=\|C_{\sigma_2}\psi\|_{L^2(\widehat{\R^2}\times\widehat{\R})}=
\|rT\psi\|_{L^2(\widehat{\R^2}\times\widehat{\R})}
=r\|T\psi\|_{L^2(\widehat{\R^2}\times\widehat{\R})}
=r.
\]
Therefore, $C_{\sigma_2}=T$.
\end{proof}
The Duflo-Moore orthogonality relations 
(Theorem \ref{Du_Mo}(d)) for
the square-integrable representation $\sigma_2$
can now be stated.
\begin{cor}\label{ortho_relations_sigma_1}
Let $\xi_1,\xi_2\in 
L^2\big(\widehat{\R^2}\times\widehat{\R}\big)$ and
$\psi_1,\psi_2\in\cD_T$. Then
\[
\left\langle V_{\psi_1}\xi_1,V_{\psi_2}\xi_2
\right\rangle_{_{L^2(G_2)}}=
\langle\xi_1,\xi_2\rangle_{_{L^2(\widehat{\R^2}\times\widehat{\R})}}
\langle T\psi_2,T\psi_1\rangle_{_{L^2(\widehat{\R^2}\times\widehat{\R})}}.
\]
\end{cor}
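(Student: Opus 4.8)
The plan is to obtain this as a direct specialization of the abstract Duflo--Moore orthogonality relations in Theorem \ref{Du_Mo}(d) to the square--integrable representation $\sigma_2$. Proposition \ref{sigma_2_prop} already guarantees that $\sigma_2$ is square--integrable, so Theorem \ref{Du_Mo} applies with $G=G_2$, $\cH_\pi=L^2\big(\widehat{\R^2}\times\widehat{\R}\big)$, and $\pi=\sigma_2$. The voice transform $V_\psi\xi[\ux,A]=\langle\xi,\sigma_2[\ux,A]\psi\rangle$ used throughout Section 4 is exactly the quantity defined in \eqref{voice}, so no reconciliation of notation is needed.

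First I would record that the admissible vectors of $\sigma_2$ coincide with $\cD_T$. By Theorem \ref{Du_Mo}(c) the Duflo--Moore operator $C_{\sigma_2}$ has domain $\cD_{\sigma_2}$, while Theorem \ref{Duflo_Moore_operator} identifies $C_{\sigma_2}$ with the multiplication operator $T$, whose domain is $\cD_T$ by construction. Hence $\cD_{\sigma_2}=\cD_T$, and the hypotheses $\psi_1,\psi_2\in\cD_T$ in the corollary are precisely the admissibility hypotheses $\eta_1,\eta_2\in\cD_{\sigma_2}$ required in the statement of Theorem \ref{Du_Mo}(d).

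Next I would substitute into the orthogonality relation. Taking $\xi_1,\xi_2\in L^2\big(\widehat{\R^2}\times\widehat{\R}\big)$, setting $\eta_1=\psi_1$ and $\eta_2=\psi_2$, and then replacing $C_{\sigma_2}$ by $T$ via Theorem \ref{Duflo_Moore_operator}, part (d) of Theorem \ref{Du_Mo} yields
\[
\langle V_{\psi_1}\xi_1, V_{\psi_2}\xi_2\rangle_{L^2(G_2)}
=\langle\xi_1,\xi_2\rangle_{L^2(\widehat{\R^2}\times\widehat{\R})}\,
\langle C_{\sigma_2}\psi_2, C_{\sigma_2}\psi_1\rangle_{L^2(\widehat{\R^2}\times\widehat{\R})},
\]
and rewriting $C_{\sigma_2}=T$ gives the claimed identity.

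Since the entire content is carried by Theorem \ref{Du_Mo}(d) together with the identification $C_{\sigma_2}=T$, I do not expect a genuine computational obstacle. The one point that must be verified is the domain matching $\cD_{\sigma_2}=\cD_T$, which is the sole place where the concrete work of Section 4 is invoked; everything else is bookkeeping. In writing the argument I would be careful to match the order of the arguments in $\langle C_{\sigma_2}\psi_2, C_{\sigma_2}\psi_1\rangle$ exactly as they appear in Theorem \ref{Du_Mo}(d), so as not to introduce an inadvertent complex-conjugation swap between the two wavelet slots.
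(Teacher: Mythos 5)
Your proposal is correct and matches the paper's treatment exactly: the paper presents this corollary as an immediate specialization of Theorem \ref{Du_Mo}(d) to $\pi=\sigma_2$, using Theorem \ref{Duflo_Moore_operator} to replace $C_{\sigma_2}$ by $T$ (with the domain identification $\cD_{\sigma_2}=\cD_T$ coming from the uniqueness clause of Theorem \ref{Du_Mo}, just as you note). Your attention to the argument order in $\langle T\psi_2,T\psi_1\rangle$ is also consistent with the paper's statement.
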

Now, we no longer need $\psi$ to be formed from
functions $\zeta$ and $\phi$ as in Proposition 
\ref{sigma_2_prop}. We only need $\psi\in 
L^2\big(\widehat{\R^2}\times\widehat{\R}\big)$ and
$\|T\psi\|_{L^2(\widehat{\R^2}\times\widehat{\R})}
=1$. Then the identity in Corollary 
\ref{ortho_relations_sigma_1} implies the following
corollary.
\begin{cor}\label{sigma_1_wavelet2}
Let $\psi\in 
L^2\big(\widehat{\R^2}\times\widehat{\R}\big)$
satisfy
\begin{equation}\label{sigma_1_wavelet}
\int_{\widehat{\R}}\int_{\widehat{\R^2}}\frac{|\psi(\uomega,\omega_3)|^2}{\|\uomega\|^2|\omega_3|}d\uomega\,d\omega_3=1.
\end{equation}
Then $V_\psi:
L^2\big(\widehat{\R^2}\times\widehat{\R}\big)
\to L^2(G_2)$ is a linear isometry.
\end{cor}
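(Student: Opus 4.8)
The plan is to recognize that the hypothesis \eqref{sigma_1_wavelet} is nothing more than the normalization $\|T\psi\|_{L^2(\widehat{\R^2}\times\widehat{\R})}=1$, and then to collapse all four arguments of the orthogonality relation in Corollary \ref{ortho_relations_sigma_1} onto a single pair. Everything then reduces to a one-line substitution.

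First I would unwind the definition of the multiplication operator $T$ to see that, for any $\psi\in L^2\big(\widehat{\R^2}\times\widehat{\R}\big)$,
\[
\|T\psi\|_{L^2(\widehat{\R^2}\times\widehat{\R})}^2=\int_{\widehat{\R}}\int_{\widehat{\R^2}}\big|\,\|\uomega\|^{-1}|\omega_3|^{-1/2}\psi(\uomega,\omega_3)\,\big|^2\,d\uomega\,d\omega_3=\int_{\widehat{\R}}\int_{\widehat{\R^2}}\frac{|\psi(\uomega,\omega_3)|^2}{\|\uomega\|^2|\omega_3|}\,d\uomega\,d\omega_3.
\]
Thus \eqref{sigma_1_wavelet} asserts exactly that this quantity equals $1$. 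Since it is in particular finite, $\psi$ lies in the domain $\cD_T$ of $T$; and by Theorem \ref{Duflo_Moore_operator} this is precisely the domain of the Duflo--Moore operator $C_{\sigma_2}$, so $\psi$ is an admissible vector. Consequently Theorem \ref{Du_Mo}(b) guarantees that $V_\psi\xi\in L^2(G_2)$ for every $\xi$, which is needed both for the target statement and for the hypotheses of Corollary \ref{ortho_relations_sigma_1} to apply.

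Next I would invoke Corollary \ref{ortho_relations_sigma_1} with $\psi_1=\psi_2=\psi\in\cD_T$ and $\xi_1=\xi_2=\xi$, obtaining
\[
\|V_\psi\xi\|_{L^2(G_2)}^2=\langle\xi,\xi\rangle_{L^2(\widehat{\R^2}\times\widehat{\R})}\,\langle T\psi,T\psi\rangle_{L^2(\widehat{\R^2}\times\widehat{\R})}=\|\xi\|_{L^2(\widehat{\R^2}\times\widehat{\R})}^2\,\|T\psi\|_{L^2(\widehat{\R^2}\times\widehat{\R})}^2=\|\xi\|_{L^2(\widehat{\R^2}\times\widehat{\R})}^2,
\]
where the final equality uses the normalization just established. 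Hence $V_\psi$ is norm-preserving, so it is a linear isometry of $L^2\big(\widehat{\R^2}\times\widehat{\R}\big)$ into $L^2(G_2)$, as claimed. Since each step is a direct substitution into the already-proved orthogonality relation, there is no serious obstacle here; the only point requiring any care is the bookkeeping identification of the weighted integral in \eqref{sigma_1_wavelet} with $\|T\psi\|^2$, together with the attendant observation that its finiteness places $\psi$ in $\cD_T=\cD_{\sigma_2}$, so that Corollary \ref{ortho_relations_sigma_1} is genuinely applicable.
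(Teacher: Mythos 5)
Your proposal is correct and follows essentially the same route as the paper: the paper's own justification is precisely the observation that the hypothesis \eqref{sigma_1_wavelet} says $\|T\psi\|_{L^2(\widehat{\R^2}\times\widehat{\R})}=1$ with $\psi\in\cD_T$, after which Corollary \ref{ortho_relations_sigma_1} applied with $\psi_1=\psi_2=\psi$ and $\xi_1=\xi_2=\xi$ gives the isometry. Your additional remark that $\cD_T$ is the domain of the Duflo--Moore operator, so that Theorem \ref{Du_Mo}(b) guarantees $V_\psi\xi\in L^2(G_2)$, is a sound fleshing-out of a detail the paper leaves implicit.
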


\section{The Generalized Wavelet Transforms}
Corollaries \ref{ortho_relations_sigma_1} and
\ref{sigma_1_wavelet2} are the critical 
parts of forming a continuous wavelet transform
and reconstruction formula arising from the
square-integrable representation $\sigma_2$ of
$G$. In this section, we bring together the various 
details of the transform as well as the version where
an inverse Fourier transform in the third dimension
is carried.
\begin{defi}
A function $\psi\in 
L^2\big(\widehat{\R^2}\times\widehat{\R}\big)$ is
called a {\em $\sigma_2$-wavelet} if
\[
\int_{\widehat{\R^2}}\int_{\widehat{\R}}
\frac{|\psi(\uomega,\omega_3)|^2}{\|\uomega\|^2|\omega_3|}d\uomega\,d\omega_3=1.
\]
\end{defi}
 For each
$\ux\in\R^2$ and $A=\begin{pmatrix}
a & b\\
c & d
\end{pmatrix}\in\gl2$,
define
$\psi_{\ux,A}$ on 
$\widehat{\R^2}\times\widehat{\R}$ by
\[
\psi_{\ux,A}(\uomega,\omega_3)=
\textstyle\frac{|\det(A)|\cdot\|\uomega\|}{\|\uomega A\|}
e^{2\pi i(\uomega\ux+\omega_3 u_{\uomega,A})}
\psi\big(\uomega A,\omega_3v_{\uomega,A}),
\text{ for a.e. } (\uomega,\omega_3)\in 
\widehat{\R^2}\times\widehat{\R},
\]
where 
\[
u_{\uomega,A}=\frac{(ac+bd)(\omega_1^2-\omega_2^2)-(a^2+b^2-c^2-d^2)\omega_1\omega_2}{(a\omega_1+c\omega_2)^2+(b\omega_1+d\omega_2)^2}
\]
and
\[
v_{\uomega,A}=\frac{(ad-bc)(\omega_1^2+\omega_2^2)}{(a\omega_1+c\omega_2)^2+(b\omega_1+d\omega_2)^2}.
\]
Then $\psi_{\ux,A}\in 
L^2\big(\widehat{\R^2}\times\widehat{\R}\big)$.
\begin{defi}
For each $\xi\in 
L^2\big(\widehat{\R^2}\times\widehat{\R}\big)$, let
\[
V_\psi\xi[\ux,A]=\left\langle\xi,\psi_{\ux,A}
\right\rangle_{L^2(\widehat{\R^2}\times\widehat{\R})}, \text{ for all }\ux\in\R^2,
A\in\gl2.
\]
Then $V_\psi$ is called the {\em $\sigma_2$-wavelet
transform} with $\sigma_2$-wavelet $\psi$.
\end{defi}
The reconstruction formula given in 
\eqref{reconstruction} can now be stated for the
$\sigma_2$-wavelet transform.
\begin{theorem}\label{reconstruction_final1}
Let $\psi\in 
L^2\big(\widehat{\R^2}\times\widehat{\R}\big)$
be a $\sigma_2$-wavelet. Then, for any 
$\xi\in 
L^2\big(\widehat{\R^2}\times\widehat{\R}\big)$,
\[
\xi =\int_{\gl2}\int_{\R^2}V_\psi\xi[\ux,A]\,
\psi_{\ux,A}\,\textstyle\frac{d\ux\,d\mu_{\gl2}(A)}{|\det(A)|},
\]
weakly in $L^2\big(\widehat{\R^2}\times\widehat{\R}\big)$.
\end{theorem}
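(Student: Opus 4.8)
The plan is to recognize the displayed formula as nothing more than the abstract reconstruction formula \eqref{reconstruction}, specialized to the square-integrable representation $\sigma_2$ with admissible vector $\psi$, and then to rewrite the Haar integral over $G_2$ in the coordinates $[\ux,A]$. The first step is to observe that the function $\psi_{\ux,A}$ defined just before the theorem is literally $\sigma_2[\ux,A]\psi$: comparing its defining formula with \eqref{sigma_1_1} they coincide term for term, including the factor $\frac{|\det(A)|\cdot\|\uomega\|}{\|\uomega A\|}$, the character $e^{2\pi i(\uomega\ux+\omega_3 u_{\uomega,A})}$, and the dilation $\xi\big(\uomega A,\omega_3 v_{\uomega,A}\big)$. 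Consequently $V_\psi\xi[\ux,A]=\langle\xi,\psi_{\ux,A}\rangle=\langle\xi,\sigma_2[\ux,A]\psi\rangle$ is exactly the voice transform \eqref{voice} for $\pi=\sigma_2$, $\eta=\psi$, evaluated at the group element $[\ux,A]$.

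Second, I would confirm that a $\sigma_2$-wavelet $\psi$ is a wavelet in the Duflo--Moore sense, i.e. $\|C_{\sigma_2}\psi\|=1$. By Theorem \ref{Duflo_Moore_operator} the Duflo--Moore operator is exactly $T$, so
\[
\|C_{\sigma_2}\psi\|^2=\|T\psi\|^2=\int_{\widehat{\R}}\int_{\widehat{\R^2}}\frac{|\psi(\uomega,\omega_3)|^2}{\|\uomega\|^2|\omega_3|}\,d\uomega\,d\omega_3=1,
\]
the last equality being precisely the $\sigma_2$-wavelet normalization. Equivalently, Corollary \ref{ortho_relations_sigma_1} with $\psi_1=\psi_2=\psi$ gives $\langle V_\psi\xi_1,V_\psi\xi_2\rangle_{L^2(G_2)}=\langle\xi_1,\xi_2\rangle\,\|T\psi\|^2=\langle\xi_1,\xi_2\rangle$, which is the instance \eqref{V_eta_inner_prod} of the orthogonality relations that drives the argument; this is also recorded as the isometry statement of Corollary \ref{sigma_1_wavelet2}.

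Third, I would unwind the weak reconstruction by hand. For an arbitrary test vector $\chi\in L^2\big(\widehat{\R^2}\times\widehat{\R}\big)$, the identity from the previous step yields
\[
\langle\xi,\chi\rangle=\langle V_\psi\xi,V_\psi\chi\rangle_{L^2(G_2)}=\int_{G_2}V_\psi\xi[\ux,A]\,\overline{V_\psi\chi[\ux,A]}\,d\mu_{G_2},
\]
and since $\overline{V_\psi\chi[\ux,A]}=\langle\psi_{\ux,A},\chi\rangle$ this reads $\langle\xi,\chi\rangle=\int_{G_2}V_\psi\xi[\ux,A]\,\langle\psi_{\ux,A},\chi\rangle\,d\mu_{G_2}$, which is exactly the weak meaning of $\xi=\int_{G_2}V_\psi\xi[\ux,A]\,\psi_{\ux,A}\,d\mu_{G_2}$. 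Finally I would substitute the explicit left Haar integral \eqref{Haar_G_2}, replacing $\int_{G_2}\cdots\,d\mu_{G_2}$ by $\int_{\gl2}\int_{\R^2}\cdots\,\frac{d\ux\,d\mu_{\gl2}(A)}{|\det(A)|}$, to recover the displayed reconstruction formula.

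I expect no serious obstacle, since every hard ingredient, namely the irreducibility and square-integrability of $\sigma_2$ and the identification $C_{\sigma_2}=T$, is already in hand; the remaining work is the bookkeeping of matching $\psi_{\ux,A}$ with $\sigma_2[\ux,A]\psi$, verifying the normalization $\|T\psi\|=1$, and converting the abstract Haar integral into concrete coordinates. The only point requiring care is the interpretation of the vector-valued integral, which must be read weakly against test vectors $\chi$ as above; one should also note that the passage through \eqref{Haar_G_2} is legitimate because $V_\psi\xi$ and $V_\psi\chi$ both lie in $L^2(G_2)$, so their pointwise product is integrable by Cauchy--Schwarz and the coordinate change for $d\mu_{G_2}$ applies without difficulty.
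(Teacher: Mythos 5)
Your proposal is correct and follows essentially the same route as the paper, which presents Theorem \ref{reconstruction_final1} as an immediate consequence of the Duflo--Moore machinery already assembled: the identification $\psi_{\ux,A}=\sigma_2[\ux,A]\psi$, the isometry of $V_\psi$ from Corollaries \ref{ortho_relations_sigma_1} and \ref{sigma_1_wavelet2} (equivalently $\|C_{\sigma_2}\psi\|=\|T\psi\|=1$ via Theorem \ref{Duflo_Moore_operator}), the weak reconstruction formula \eqref{reconstruction}, and the explicit Haar integral \eqref{Haar_G_2}. Your explicit unwinding of the weak integral against a test vector $\chi$ is exactly the standard argument the paper invokes implicitly, so there is nothing to correct.
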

Recall that 
the integral in Theorem
\ref{reconstruction_final1} is over the 6 real
variables $x_1,x_2,a,b,c,d$, where 
$A=\begin{pmatrix}
a & b\\c & d
\end{pmatrix}$ and 
$\frac{d\ux\,d\mu_{\gl2}(A)}{|\det(A)|}=
\frac{dx_1\,dx_2\,da\,db\,dc\,dd}{|ad-bc|^3}$.

This transform can also be expressed in a related manner by taking an inverse Fourier transform in the 
distinguished third variable.

Let $\cF_3: L^2\big(\widehat{\R^2}\times\R\big)\to
L^2\big(\widehat{\R^2}\times\widehat{\R}\big)$ be the
unitary map such that
\[
\cF_3f(\uomega,\omega_3)=
\int_{\R}f(\uomega,t)e^{2\pi i\omega_3t}dt,
\text{ for all }(\uomega,t)\in \widehat{\R^2}\times\R
\]
and $f\in C_c\big(\widehat{\R^2}\times\R\big)$. 
Then
$\cF_3^{-1}: L^2\big(\widehat{\R^2}\times\R\big)\to
L^2\big(\widehat{\R^2}\times\widehat{\R}\big)$ is
such that
\[
\cF_3^{-1}\xi(\uomega,t)=
\int_{\widehat{\R}}\xi(\uomega,\omega_3)
e^{-2\pi i\omega_3t}d\omega_3,
\text{ for any } (\uomega,t)\in 
\widehat{\R^2}\times\R,
\]
and $\xi\in C_c\big(\widehat{\R^2}\times\widehat{\R}
\big)$. Use $\cF_3$ to move $\sigma_2$ to an
equivalent representation acting on 
$L^2\big(\widehat{\R^2}\times\R\big)$.

\begin{defi}
For each $[\ux,A]\in G_2$, let
$\rho_2[\ux,A]=\cF_3^{-1}\sigma_2[\ux,A]\cF_3$.
\end{defi}
\begin{prop}
The map
$\rho_2$ is a square-integrable representation
of the affine group 
$G_2$ on the Hilbert space
$L^2\big(\widehat{\R^2}\times\R\big)$. For 
$f\in L^2\big(\widehat{\R^2}\times\R\big)$ and
$[\ux,A]\in G_2$,
\[
\rho_2[\ux,A]f(\uomega,t)=
\textstyle\frac{\|\uomega A\|}{\|\uomega\|}
e^{2\pi i\uomega\ux}f\left(\uomega A,
\frac{t-u_{\uomega,A}}{v_{\uomega,A}}\right),
\]
for a.e. $(\uomega,t)\in \widehat{\R^2}\times\R$.
\end{prop}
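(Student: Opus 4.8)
The plan is to handle the structural assertions by unitary equivalence and then to extract the explicit formula from a direct computation of the conjugation $\cF_3^{-1}\sigma_2[\ux,A]\cF_3$.

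First I would note that $\cF_3:L^2(\widehat{\R^2}\times\R)\to L^2(\widehat{\R^2}\times\widehat{\R})$ is unitary, so for each $[\ux,A]$ the operator $\rho_2[\ux,A]=\cF_3^{-1}\sigma_2[\ux,A]\cF_3$ is unitary, the map $[\ux,A]\mapsto\rho_2[\ux,A]$ inherits the homomorphism property and the weak-operator continuity from $\sigma_2$, and both irreducibility and square-integrability are preserved under unitary equivalence. Since $\sigma_2$ is square-integrable by Proposition \ref{sigma_2_prop}, $\rho_2$ is a square-integrable representation of $G_2$. The only remaining content is the pointwise formula.

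To obtain it, I would apply the three maps in turn to $f$. Writing $\tilde f=\cF_3 f$ and using \eqref{sigma_1_1}, the factor $\frac{|\det(A)|\,\|\uomega\|}{\|\uomega A\|}\,e^{2\pi i\uomega\ux}$ does not depend on $\omega_3$ and therefore passes outside the $\omega_3$-integral that defines $\cF_3^{-1}$. The remaining operation on the distinguished variable is, for fixed $\uomega$, a modulation by $e^{2\pi i\omega_3 u_{\uomega,A}}$ composed with the dilation $\omega_3\mapsto v_{\uomega,A}\omega_3$ applied to the slice $h=(\cF_3 f)(\uomega A,\cdot)$. The heart of the calculation is the single-variable identity
\[
\int_{\widehat{\R}}e^{2\pi i\omega_3 u_{\uomega,A}}\,h(v_{\uomega,A}\omega_3)\,e^{-2\pi i\omega_3 t}\,d\omega_3
=\frac{1}{|v_{\uomega,A}|}\,f\!\left(\uomega A,\frac{t-u_{\uomega,A}}{v_{\uomega,A}}\right),
\]
which follows from the substitution $\mu=v_{\uomega,A}\omega_3$ together with $\cF_3^{-1}h=f(\uomega A,\cdot)$; in words, modulation on the frequency side becomes translation on the time side while frequency dilation becomes reciprocal time dilation with a Jacobian. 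Reinstating the prefactor and simplifying the constant by means of $v_{\uomega,A}=\det(A)\|\uomega\|^2/\|\uomega A\|^2$ from \eqref{uv}, so that $|\det(A)|/|v_{\uomega,A}|=\|\uomega A\|^2/\|\uomega\|^2$, collapses the coefficient to $\frac{\|\uomega A\|}{\|\uomega\|}\,e^{2\pi i\uomega\ux}$ and yields the stated expression.

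The main obstacle is rigor rather than algebra: the displayed identity and the interchange of the $\omega_3$- and $t$-integrations are Fourier-inversion statements valid verbatim only on a suitable dense subspace, for instance functions that are Schwartz, or at least compactly supported and smooth, in the third variable. I would therefore establish the formula first on such a dense class, where Fubini's theorem and the inversion theorem apply without difficulty, and then extend it to all of $L^2(\widehat{\R^2}\times\R)$ by continuity, using that both sides are bounded (indeed unitary) operators that agree on a dense set.
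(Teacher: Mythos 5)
Your proposal is correct and follows essentially the same route as the paper: square-integrability and irreducibility are inherited through the unitary equivalence $\rho_2[\ux,A]=\cF_3^{-1}\sigma_2[\ux,A]\cF_3$, and the pointwise formula is extracted by the same computation, pulling the $\omega_3$-independent factor out of the inversion integral, absorbing the modulation $e^{2\pi i\omega_3 u_{\uomega,A}}$ into $e^{-2\pi i\omega_3 t}$, substituting $\omega_3'=v_{\uomega,A}\omega_3$, and simplifying the constant via $|v_{\uomega,A}|=|\det(A)|\,\|\uomega\|^2/\|\uomega A\|^2$. The only difference is that you make explicit the dense-subspace-plus-continuity argument justifying the pointwise use of Fourier inversion, which the paper's calculation leaves implicit.
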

\begin{proof}
Since $\rho_2$ is equivalent to $\sigma_2$ (and 
$\sigma$), it is an irreducible representation
that is square--integrable. To verify its 
formula, fix $[\ux,A]\in G_2$ and
let $\xi=\cF_3f$. Then
\begin{equation}\label{rho_1}
\begin{split}
\rho_2[\ux,A]f(\uomega,t) & = 
\cF_3^{-1}\sigma_2[\ux,A]\xi(\uomega,t)=
\int_{\widehat{\R}}
\sigma_2[\ux,A]\xi(\uomega,\omega_3)
e^{-2\pi i\omega_3t}d\omega_3\\
& = \int_{\widehat{\R}}\textstyle
\frac{|\det(A)|\cdot\|\uomega\|}{\|\uomega A\|}
e^{2\pi i(\uomega\ux+\omega_3u_{\uomega,A})}
\xi(\uomega A,\omega_3v_{\uomega,A})
e^{-2\pi i\omega_3t}d\omega_3\\
& = {\textstyle
\frac{|\det(A)|\cdot\|\uomega\|}{\|\uomega A\|}}
e^{2\pi i(\uomega\ux}\int_{\widehat{\R}}
\xi(\uomega A,\omega_3v_{\uomega,A})
e^{-2\pi i\omega_3(t-u_{\uomega,A})}d\omega_3.
\end{split}
\end{equation}
Make the change of variables $\omega_3'=
\omega_3v_{\uomega,A}$. So $d\omega_3'=
|v_{\uomega,A}|d\omega_3=
\frac{|\det(A)|\cdot\|\uomega\|^2}{\|\uomega A\|^2}
d\omega_3$. Thus, \eqref{rho_1} becomes
\[
\begin{split}
\rho_2[\ux,A]f(\uomega,t) & =
{\textstyle\frac{\|\uomega A\|}{\|\uomega\|}}
e^{2\pi i\uomega\ux}\int_{\widehat{\R}}
\xi(\uomega A,\omega_3')
e^{-2\pi i\omega_3'v_{\uomega,A}^{-1}
(t-u_{\uomega,A})}d\omega_3'\\
& =
\textstyle\frac{\|\uomega A\|}{\|\uomega\|}
e^{2\pi i\uomega\ux}f\left(\uomega A,
\frac{t-u_{\uomega,A}}{v_{\uomega,A}}\right),
\end{split}
\]
for a.e. $(\uomega,t)\in \widehat{\R^2}\times\R$.
\end{proof}
\begin{defi}
A function $w\in L^2\big(\widehat{\R^2}\times\R\big)$ is called a {\em $\rho_2$-wavelet} if 
$\cF_3w$ is a $\sigma_2$-wavelet; that is
\[
\int_{\widehat{\R^2}}\int_{\widehat{\R}}
\frac{|\cF_3w(\uomega,\omega_3)|^2}{\|\uomega\|^2|\omega_3|}d\uomega\,d\omega_3=1.
\]
\end{defi}
If $w$ is a $\rho_2$-wavelet, for each $[\ux,A]\in G_2$,
define $w_{\ux,A}=\rho[\ux,A]w$. That is,
\[
w_{\ux,A}(\uomega,t)=
\textstyle\frac{\|\uomega A\|}{\|\uomega\|}
e^{2\pi i\uomega\ux}w\left(\uomega A,
\frac{t-u_{\uomega,A}}{v_{\uomega,A}}\right),
\text{ for a.e. }(\uomega,t)\in 
\widehat{\R^2}\times\R,
\]
where $u_{\uomega,A}$ and $v_{\uomega,A}$ are
as above.
\begin{defi}
For each $f\in L^2\big(\widehat{\R^2}\times\R\big)$,
let
\[
V_wf[\ux,A]=\langle f,w_{\uomega,A}
\rangle_{L^2(\widehat{\R^2}\times\R)}, 
\text{ for all }\ux\in\R^2, A\in \gl2.
\]
Then $V_w$ is the {\em $\rho_2$-wavelet transform}
with $\rho_2$-wavelet $w$.
\end{defi}
\begin{theorem}\label{reconstruction_final2}
Let $w\in 
L^2\big(\widehat{\R^2}\times\R\big)$
be a $\rho_2$-wavelet. Then, for any 
$f\in 
L^2\big(\widehat{\R^2}\times\R\big)$,
\[
f =\int_{\gl2}\int_{\R^2}V_wf[\ux,A]\,
w_{\ux,A}\,\textstyle\frac{d\ux\,d\mu_{\gl2}(A)}{|\det(A)|},
\]
weakly in $L^2\big(\widehat{\R^2}\times\R\big)$.
\end{theorem}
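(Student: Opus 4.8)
The plan is to transfer the already-established reconstruction formula for $\sigma_2$ (Theorem \ref{reconstruction_final1}) across the unitary map $\cF_3$, which by definition intertwines $\sigma_2$ and $\rho_2$ via $\rho_2[\ux,A]=\cF_3^{-1}\sigma_2[\ux,A]\cF_3$. Every object appearing in the present statement is the image under $\cF_3^{-1}$ of the corresponding object on the $\sigma_2$ side, so the only genuine work is to check that the two wavelet transforms agree as functions on $G_2$ and that the weak integral identity survives passage through $\cF_3$.

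First I would set $\xi=\cF_3 f$ and $\psi=\cF_3 w$. By the definition of a $\rho_2$-wavelet, $\psi$ is a $\sigma_2$-wavelet, so Theorem \ref{reconstruction_final1} applies to the pair $(\xi,\psi)$. Next I would record the two compatibility identities that drive the argument. Since $w_{\ux,A}=\rho_2[\ux,A]w=\cF_3^{-1}\sigma_2[\ux,A]\cF_3 w=\cF_3^{-1}\psi_{\ux,A}$, applying $\cF_3$ gives $\cF_3 w_{\ux,A}=\psi_{\ux,A}$. Unitarity of $\cF_3$ then yields
\[
V_w f[\ux,A]=\langle f,w_{\ux,A}\rangle=\langle \cF_3 f,\cF_3 w_{\ux,A}\rangle=\langle \xi,\psi_{\ux,A}\rangle=V_\psi\xi[\ux,A],
\]
so the two transforms coincide.

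With these identities in hand I would unwind the weak sense of the reconstruction rather than try to move $\cF_3^{-1}$ past the integral directly. Fix an arbitrary test vector $g\in L^2\big(\widehat{\R^2}\times\R\big)$ and set $h=\cF_3 g$. Unitarity of $\cF_3$ gives $\langle f,g\rangle=\langle \xi,h\rangle$ and $\langle w_{\ux,A},g\rangle=\langle \psi_{\ux,A},h\rangle$. Substituting these together with $V_w f=V_\psi\xi$ into the right-hand side of the claimed formula and invoking the weak reconstruction of Theorem \ref{reconstruction_final1} for $\xi$ paired against $h$ gives
\[
\int_{\gl2}\int_{\R^2} V_w f[\ux,A]\,\langle w_{\ux,A},g\rangle\,\frac{d\ux\,d\mu_{\gl2}(A)}{|\det(A)|}=\langle \xi,h\rangle=\langle f,g\rangle.
\]
Since $g$ was arbitrary, this is precisely the asserted weak reconstruction of $f$.

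The step demanding the most care is the last one: the integrals in both Theorem \ref{reconstruction_final1} and the present statement converge only weakly, so I cannot legitimately apply $\cF_3^{-1}$ term-by-term to a norm-convergent Bochner integral. The argument must instead proceed through the defining weak formulation, pairing against an arbitrary $g$ and transporting the pairing to the $\sigma_2$ side through $h=\cF_3 g$. Everything else -- the normalization of $\psi$ as a $\sigma_2$-wavelet and the identity $\cF_3 w_{\ux,A}=\psi_{\ux,A}$ -- follows immediately from the definitions and the fact that $\cF_3$ preserves inner products, with no analytic subtleties.
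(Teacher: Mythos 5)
Your proof is correct and follows exactly the route the paper intends: Theorem \ref{reconstruction_final2} is stated there without proof precisely because it is the transfer of Theorem \ref{reconstruction_final1} across the unitary $\cF_3$, using $\cF_3 w_{\ux,A}=\psi_{\ux,A}$, $V_wf=V_{\cF_3 w}(\cF_3 f)$, and the weak (pairing-against-test-vectors) formulation. Your explicit handling of the weak integral by pairing with an arbitrary $g$ and setting $h=\cF_3 g$ is the right way to make that transfer rigorous.
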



\begin{thebibliography}{100}
\bibitem{ACDL} P. Aniello, G. Cassinelli, E. de 
Vitto, and A. Levrero: Square-integrability of 
induced representations of semidirect products, 
{\em Rev. Math. Phys.}, {\bf 10}, 301--313, 1998.

\bibitem{ACMP} J.--P. Antoine, P. Carrette, R.
Murenzi, and B. Piette: Image Analysis with 
two--dimensional continuous wavelet transform, 
{\em Signal Processing}, {\bf 31}, 241--272, 1993.

\bibitem{AM} J.--P. Antoine and R. Murenzi: 
Two--dimensional directional wavelets and the 
scale--angle representation, {\em Signal Processing}, 
{\bf 52}, 259--281, 1996.

\bibitem{BagT} L. Baggett and K. F. Taylor: Groups with completely reducible regular representation, {\em Proc. AMS}, {\bf 72}, 593--600, 1978.

\bibitem{BT} D. Bernier and K.F. Taylor: {\em Wavelets from square-integrable representations.} {\it SIAM J. Math. Anal}., {\bf 27} 594--608, 1996.

\bibitem{CFO} B. Currey, H. F\"uhr, and V. Oussa: A
classification of continuous wavelet transforms in
dimension three, {\em Appl. and Comp. Har. Anal.},
{\bf 46}, 500--543, 2019.

\bibitem{DK} S. Dahlke, G. Kutyniok, P. Maass, C. Sagiv, H.-G. Stark, and G. Teschke.
{\em The Uncertainty Principle associated with the Continuous Shearlet Transform.}
   {\it Int. J . Wavelet Multiresolut. Inf. Process} 6( 2008), 157--181.
   
\bibitem{Da} I. Daubechies: {\em Ten lectures on Wavelets}. CBMS-NSF Regional Conference Series in Applied Mathematics, SIAM, 1992.

\bibitem{Dix} J. Dixmier: {\em C*-algebras} 
North-Holland, 1977.

\bibitem{DM} M. Duflo and C.C. Moore: On the 
Regular Representation of a Nonunimodular Locally
Compact Group, {\em J. of Functional Analysis} 
{\bf 21}, 209--243, 1976

\bibitem{Fol} G.B. Folland: {\em A Course in Abstract Harmonic Analysis}, CRC Press, 1995.

\bibitem{Fuh2}  H. F\"uhr: Wavelet frames and admissibility in higher dimensions,  {\em J. Math. Phys.} {\bf 37}, 6353--6366, 1996.

\bibitem{Fuh3}  H. F\"uhr: Continuous Wavelets
Transforms from Semidirect Products, {\em 
Revista Ciencias Matematicas} {\bf 18}, 179--190,
2000.

\bibitem{Fuh}  H. F\"uhr: {\em  Abstract harmonic analysis of continuous wavelet transforms.} Lecture Notes in Mathematics, 1863, Berlin: Springer, 2005.

\bibitem{GW} A. Gorodnik and B. Weiss: Distribution of Lattice Orbits on Homogeneous Varieties, {\em 
Geom. Funct. Anal.} {\bf 17}, 58--115, 2007.

\bibitem{GMPI} A. Grossmann, J. Morlet, and T. Paul:
Decomposition of Hardy functions into square integrable wavelets of constant shape, {\em SIAM
J Math Anal} {\bf 15}, 723--736, 1984.

\bibitem{GMP} A. Grossmann, J. Morlet, and T. Paul:
Transforms Associated to Square Integrable Group
Representations I: General Results, {\em J. Math.
Phys.}, {\bf 27}, 2473--2479, 1985.

\bibitem{GMPII} A. Grossmann, J. Morlet, and T. Paul:
Transforms Associated to Square Integrable Group
Representations II: Examples, {\em 
Annales de l'I. H. P., section A}, {\bf 45}, 
293--309, 1986.

\bibitem{GLL} K. Guo, D. Labate, and W.--Q. Lim:
Edge analysis and identification using the 
continuous shearlet transform, {\em Appl. Comput. 
Harmon. Anal.},  {\bf 27}, 24--46, 2009.

\bibitem{HW} C.E. Heil and D.F. Walnut: Continuous and discrete wavelet transforms, {\em SIAM Review} {\bf 31}, 628--666, 1989.

\bibitem{HR}  E. Hewitt and K.A. Ross:  {\em Abstract harmonic analysis. I}, Berlin: Springer, 1963.

 \bibitem{KR} Richard V. Kadison and John R. Ringrose: {\em Fundamentals of the Theory of Operator Algebras }. vol. I, Academic press, San Diego, California, 1983.

\bibitem{KT} E. Kaniuth and K.F. Taylor: {\em Induced representations of locally compact groups}, Cambridge Tracts in
Mathematics, Vol. 197, Cambridge University Press, 2013.

\bibitem{KL} A. Kleppner and R. Lipsman: The 
Plancherel Formula for Group Extensions, {\em 
Ann. Sci. de l'\'E.N.S. $4^e$ s\'erie}, {\bf 5},
459--516, 1972.

\bibitem{KuLa} G. Kutyniok and D. Labate: 
Resolution of the wavefront set using continuous
shearlets, {\em Trans. AMS}, {\bf 361}, 2719--2754, 
2009.
 
 \bibitem{Mac} G.W. Mackey: Induced 
Representations of Locally Compact Groups I, 
{\em Ann. of Math.} {\bf 55}, 101--139, 1952.

\bibitem{Mil} R. Milad: {\em Harmonic Analysis On Affine Groups: Generalized Continuous Wavelet Transforms}, Doctoral Thesis, Dalhousie University,
2021.
http://hdl.handle.net/10222/80691

\bibitem{MT} R. Milad and K.F. Taylor: 
Harmonic Analysis on the Affine Group of 
the Plane ({\em Preprint}).

\bibitem{Muz} R. Murenzi: {\em Ondelettes 
multidimensionelles et application \`a l'analyse 
d'images}. Th\`ese, Universit\'e Catholique de Louvain,
Louvain--La--Neuve, 1990.

\bibitem{Ru} W. Rudin: {\em Real and Complex
Analysis}, McGraw-Hill, 1987.

\end{thebibliography}
\end{document}